\providecommand{\U}[1]{\protect\rule{.1in}{.1in}}
\theoremstyle{plain}
\newtheorem{corollary}{Corollary}
\newtheorem{lemma}{Lemma}
\newtheorem{theorem}{Theorem}
\numberwithin{equation}{section}
\newcommand{\lap}{\mbox{$\triangle$}}
\begin{document}
\title[Quantitative uniqueness of elliptic equations]{Quantitative uniqueness of elliptic equations}
\author{Jiuyi Zhu }
\address{Department of Mathematics\\
Johns Hopkins University\\
Baltimore, MD 21218, USA\\
Emails:  jzhu43@math.jhu.edu }
\thanks{\noindent }
\date{}
\subjclass{35B99, 35J15, 35J30,} \keywords {Schr\"{o}dinger
equation, quantitative uniqueness,
 higher order elliptic equations, strong unique continuation.} \dedicatory{ }

\begin{abstract}
Based on a variant of frequency function, we improve the vanishing
order of solutions for Schr\"{o}dinger equations which describes
quantitative behavior of strong uniqueness continuation property.
For the first time, we investigate the quantitative uniqueness of
higher order elliptic equations and show the vanishing order of
solutions. Furthermore, strong unique continuation is established
for higher order elliptic equations using this variant of frequency
function.

\end{abstract}

\maketitle
\section{Introduction}

We say the vanishing order of solution at $x_0$ is $l$, if $l$ is
the largest integer such that $D^{\alpha}u(x_0)=0$ for all
$|\alpha|\leq l$. It describes quantitative behavior of strong
unique continuation property.  It is well known that all zeros of
nontrivial solutions of second order  linear equations on smooth
compact Riemannian manifolds are of finite order \cite{Ar}. In the
papers \cite{DF} and \cite{DF1}, Donnelly and Fefferman showed that
if $u$ is an eigenfunction on a compact smooth Riemannian manifold
$\mathcal{M}$, that is,
 $$-\lap_{\mathcal{M}} u=\lambda u, \quad \quad \mbox{in} \ \mathcal{M}$$
for some $\lambda>0$, then the maximal vanishing order of $u$ on
$\mathcal{M}$ is less than $C\sqrt{\lambda}$, here $C$ only depends
on the manifold $\mathcal{M}$.
 Kukavica
in \cite{Ku} considered the vanishing order of solutions of
Schr\"{o}dinger equation
\begin{equation}
-\lap_{\mathcal{M}} u=V(x)u, \label{schro}
\end{equation}
where $V(x)\in L^\infty(\mathcal{M})$. He established that the
vanishing order of solution in (\ref{schro}) is everywhere less than
\begin{equation}
C(1+(\sup_{\Omega}V_+)^{\frac{1}{2}}+osc(V)^2),\label{order}
\end{equation}
 where $V_+(x)=\max\{V(x), \ 0\}$, $osc(V)=\sup V-\inf V$
and $C$ only depends on the underlying domain $\mathcal{M}$. If
$V\in C^1(\mathcal{M})$, Kukavica was able to show that the upper
bound of vanishing order is less than $C(1+\|V\|_{C^1})$,
 where $\|V\|_{C^1}=\|V\|_{L^\infty}+\|\nabla
V\|_{L^\infty}$. Based on the Donnelly and Fefferman's  work in
\cite{DF}, Kukavica conjectured that the rate of of vanishing order
of $u$ is less than $C(1+\|V\|_{L^\infty}^{\frac{1}{2}})$ for the
cases of $V\in L^\infty$ and $V\in C^1$. For the upper bound in
(\ref{order}), it agrees with Donnelly and Fefferman's results in
the eigenvalue case $V(x)=\lambda$.
 Recently, Kenig
\cite{K} considered a similar problem which is motivated by his work
with Bourgain in \cite{BK} on Anderson localization for the
Bernoulli model. Kenig investigated the following normalized model.
Let
\begin{equation}\lap u(x)=V(x)u(x) \quad \mbox{in} \ \mathbb B_{10}, \quad
\mbox{with} \quad \|V\|_{L^\infty}\leq M \quad \mbox{and} \quad
\|u\|_{L^\infty}\leq C_0, \label{kenig}
 \end{equation}
 where $\mathbb B_{10}$ is a ball centered at origin with radius $10$ in $\mathbb R^n$. Assume that $\sup_{|x|\leq
1}|u(x)|\geq 1$ and $M>1$. Kenig established that
\begin{equation}
\|u\|_{L^\infty (\mathbb B_r)}\geq a_1r^{a_2\beta(M)} \quad \quad
\mbox{as}\ r\to 0, \label{like}
\end{equation}
 where $a_1,
a_2$ depend only on $n$, $C_0$ and $\beta(M)$ depends on $M$. By
exploiting the Carleman estimates, Kenig proved the
$\beta(M)=M^{\frac{2}{3}}$. He also pointed out that the exponent
$\frac{2}{3}$ of $M$ is sharp for complex valued $V$ based on
Meshkov's example in \cite{M}. On the basis of Donnelly and
Fefferman's work, Kenig asked if $\beta(M)=M^{\frac{1}{2}}$ can be
achieved for real $u, V$.  Very recently, Bakri in \cite{B}
considered (\ref{schro}) in the case of $V(x)\in C^1$. He obtained
that the vanishing order of solutions in (\ref{schro}) is less than
$C(1+\sqrt{\|V\|_{C^1}})$. His proof is an extension of the Carleman
estimates in \cite{DF}. It is worthwhile to mention that the
vanishing order of solutions is closely related to the study of
eigenfunctions on manifolds. We refer to the survey \cite{Z} for
detailed account.

We are especially interested in the model (\ref{kenig}). Our first
goal in this paper is to address the above problems. Relied on a
variant of frequency function, we are able to verify that
$\beta(M)=M^{\frac{1}{2}}$ is indeed true for the case of $V(x)\in
W^{1,\infty}$ in (\ref{like}). In particular, our result also
confirms that the vanishing order of solutions in (\ref{schro}) is
less than $C(1+\|V\|_{W^{1, \infty}}^{1/2})$ if $V\in  W^{1,
\infty}$.
\begin{theorem}
Assume that $V(x)\in W^{1,\infty}(\mathbb B_{10})$. Under the
assumptions in (\ref{kenig}) with $\|V\|_{W^{1,\infty}}\leq M$, the
maximal vanishing order of $u$ in (\ref{kenig}) is less than
$C\sqrt{M}$, where $C$ depends on $n$ and $C_0$. \label{th1}
\end{theorem}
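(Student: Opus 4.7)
The strategy is to prove the vanishing-order estimate via a quantitative doubling inequality derived from a modified Almgren-type frequency function, designed so that the potential affects the monotonicity only through $\sqrt M$ rather than $M^{2/3}$ or $M$.

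Fix an arbitrary point $x_0\in\mathbb B_1$; after translation, assume $x_0=0$. Define
\[
H(r)=\int_{\partial B_r}u^2\,d\sigma,\qquad I(r)=\int_{B_r}\bigl(|\nabla u|^2+Vu^2\bigr)dx,
\]
so that $\lap u=Vu$ gives $I(r)=\int_{\partial B_r}u\,\partial_\nu u\,d\sigma$, together with $H'(r)=(n-1)r^{-1}H(r)+2I(r)$ and $I'(r)=\int_{\partial B_r}(|\nabla u|^2+Vu^2)\,d\sigma$. I will study a variant frequency of the form
\[
\widetilde N(r)=\frac{r\,I(r)}{H(r)}+\beta(r),
\]
with $\beta(r)$ an explicit positive correction of size $O(\sqrt M\,r)$, chosen both to keep $\widetilde N(r)>0$ when $V$ is negative and to cancel the leading $V$-dependent defect in the monotonicity computation.

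The core of the argument is to establish $\widetilde N'(r)\geq -C\sqrt M$ on an interval $(0,r_0)$. Differentiating $\widetilde N$ and applying the Pohozaev-Rellich identity for $\lap u=Vu$,
\[
r\int_{\partial B_r}|\nabla u|^2\,d\sigma-2r\int_{\partial B_r}|\partial_\nu u|^2\,d\sigma=-(n-2)\int_{B_r}|\nabla u|^2\,dx+2\int_{B_r}Vu(x\cdot\nabla u)\,dx,
\]
combined with the integration-by-parts identity
\[
2\int_{B_r}Vu(x\cdot\nabla u)\,dx=r\int_{\partial B_r}Vu^2\,d\sigma-\int_{B_r}\bigl(nV+x\cdot\nabla V\bigr)u^2\,dx,
\]
and the Cauchy-Schwarz bound $I(r)^2\leq H(r)\int_{\partial B_r}|\partial_\nu u|^2\,d\sigma$, produces after bookkeeping a lower bound for $\widetilde N'$ controlled by $\int_{B_r}(2V+x\cdot\nabla V)u^2\,dx/I(r)$. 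Using $\|V\|_{L^\infty},\|\nabla V\|_{L^\infty}\leq M$ and the co-area identity $\int_{B_r}u^2=\int_0^rH(s)\,ds\lesssim rH(r)$, one expects to control this error by $C\sqrt M$ via a weighted Young inequality $\int|V|u^2\leq\varepsilon I(r)+\varepsilon^{-1}M^2\int u^2$ optimised at $\varepsilon\sim M^{-1/2}$. This is the main obstacle, and the step at which the $W^{1,\infty}$ hypothesis on $V$ is essential for the sharp exponent.

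Integrating the monotonicity yields $\widetilde N(r)\leq\widetilde N(R)+C\sqrt M$ for $0<r\leq R\leq 1/2$. A standard initial-scale estimate using $\sup_{|x|\leq 1}|u|\geq 1$, $\|u\|_{L^\infty}\leq C_0$, and a Caccioppoli-type inequality produces $\widetilde N(R)\leq C(1+\sqrt M)$ at some fixed $R>0$. Hence $\widetilde N(r)\leq C\sqrt M$ uniformly on $(0,r_0)$, which immediately yields the doubling inequality $H(2r)\leq e^{C\sqrt M}H(r)$. Since $\widetilde N(r)\to\ell$ as $r\to 0^+$ whenever $u$ vanishes at $0$ to order exactly $\ell$ (by Taylor expansion of $u$), the uniform bound forces $\ell\leq C\sqrt M$, as claimed.
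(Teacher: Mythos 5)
Your proposal is built around the classical boundary frequency function $N(r)=rI(r)/H(r)$ with $H(r)=\int_{\partial B_r}u^2$, corrected by an additive term $\beta(r)\sim\sqrt{M}\,r$, and the heart of your argument is the claim that the monotonicity error can be brought down to $O(\sqrt M)$ directly. That claim is the gap. The "weighted Young inequality" you invoke, $\int|V|u^2\leq\varepsilon I(r)+\varepsilon^{-1}M^2\int u^2$, does not do what you want: since $|V|\leq M$, the left side is already $\leq M\int u^2$, and taking $\varepsilon\sim M^{-1/2}$ in the right side gives $M^{-1/2}I(r)+M^{5/2}\int u^2$, which is strictly worse. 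There is no optimisation of $\varepsilon$ in any Young/Cauchy--Schwarz split of the $V$--dependent Pohozaev error terms that produces the $\sqrt M$ gain on its own; crude absorption always scales like $M$. The correction $\beta(r)\sim\sqrt M\,r$ is also insufficient for positivity: if $V$ is very negative then $rI(r)/H(r)\gtrsim -Mr^2$, so for $r\gg M^{-1/2}$ you would need $\beta\sim Mr^2$, not $\sqrt M\,r$, and then the frequency bound you integrate is of size $M$, not $\sqrt M$. Finally, the "standard initial-scale estimate" giving $\widetilde N(R)\leq C(1+\sqrt M)$ from the normalisation $\sup_{B_1}|u|\geq1$ is itself a nontrivial step that in the literature is proved by a three-ball/propagation-of-smallness argument, not by a Caccioppoli estimate alone.

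The paper's route is genuinely different and this is where it escapes the $M$ barrier. It replaces boundary integrals by interior integrals weighted by $(r^2-|x|^2)^\alpha$, so $H(r)=\int_{B_r}u^2(r^2-|x|^2)^\alpha$, with a free parameter $\alpha>0$. The resulting monotonicity (Lemma~\ref{mono}) is only $M$-scale: $N(r)+C\|V\|_{W^{1,\infty}}r^2$ is nondecreasing, and the $W^{1,\infty}$ hypothesis enters through one extra integration by parts that produces $\nabla V$. The $\sqrt M$ does not come from the monotonicity at all. It appears in the three-ball theorem (Lemma~\ref{niu}), because converting between the weighted $H$ and the flat $h=\int_{B_r}u^2$ introduces additive $2\alpha$ terms, while the frequency integral enters with a prefactor $1/(\alpha+1)$; the exponent in the three-ball inequality then has the form $C\bigl(\alpha+\tfrac{M}{\alpha+1}\bigr)$, which is minimised by choosing $\alpha=\sqrt M$. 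The vanishing order is then extracted by an $L^\infty$ three-ball inequality and a chain-of-balls propagation argument, not by taking $r\to0$ in the frequency. So the tunable exponent $\alpha$ in the weight is precisely the device that replaces the miracle you are hoping to get from a Young-inequality split; without a comparable free parameter your boundary-integral approach has no visible mechanism to beat $M$.
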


Generally speaking, the Carleman estimates and frequency function
are two principal ways to obtain quantitative uniqueness and strong
unique continuation results for solutions of partial differential
equations. Carleman estimates were introduced by Carleman, when he
studied the strong unique continuation property. Carleman estimates
are weighted integral inequalities. See e.g. \cite{H}, \cite{JK},
\cite{K}, \cite{KRS}, \cite{KT}, \cite{S}, \cite{W}, to just mention
a few. In order to obtain the quantitative uniqueness results for
solutions, one uses the Carleman estimates with a special choice of
weight functions to obtain a type of Hadamard's three-ball theorem,
then doubling estimates follow.
 The vanishing order will come from the doubling estimates. The frequency function was first observed by Almgren
 \cite{A} for
harmonic functions. The frequency function controls the local growth
rate of $u$ and is a local measure of its ``degree" as a polynomial
like function in $\mathbb B_r$. See e.g. \cite{GL}, \cite{GL1},
\cite{Lin}, \cite{HL}, \cite{Ku0}, \cite{Ku}, etc. Garofalo and Lin
in \cite{GL}, \cite{GL1} showed its powerful applications in strong
unique continuation problem. The frequency function Garofalo and Lin
investigated for equation (\ref{kenig}) is given by
\begin{equation}
N(r)=\frac{rD(r)}{H(r)} \label{fre}
\end{equation}
where $ H(r)=\int_{\partial\mathbb B_r} u^2 \,d \sigma $ and
$D(r)=\int_{\mathbb B_r}|\nabla u|^2+Vu^2 \,dx$. After one proves
certain monotonicity of $N(r)$, the doubling estimates will follow
by a standard argument. In \cite{GL}, it was shown that $e^{Cr}N(r)$
was monotone nondecreasing. However, $C$ depends on the norm of $V$.
It can not give the optimal bound for the vanishing order of
solutions. Kukavica considered almost the same frequency function in
\cite{Ku}. He was able to move the norm of $V$ away from the
exponential, but it only gave the aforementioned bound
$C(1+(\sup_{\Omega}V_+)^{\frac{1}{2}}+osc(V)^2)$ due to the
limitations of the method. Some of the limitations come from the
fact that one can not explore $H'(r)$ more because of its
integration on the boundary of balls. Instead, we consider a variant
of (\ref{fre}). See our variant of frequency function for
Schr\"{o}dinger equations in section 2 and the frequency functions
for high order elliptic equations in section 3 for the details.
First, we establish a monotonicity property of this new variant of
frequency function. Second, based on the monotonicity results, it
leads to a $L^2$-version of Hadamard's three-ball theorem, which
further implies a $L^\infty$-version of Hadamard's three-ball
theorem by elliptic estimates. At last, by a propagation of
smallness argument, we derive the vanishing order of solutions.

Higher order elliptic equations are also important models in the
study of partial differential equations. A nature question is to
study the quantitative uniqueness of higher order elliptic
equations. Our second goal is to investigate the vanishing order for
solutions of higher order elliptic equations. We consider this
normalized model:
\begin{equation}
(-\lap)^m u(x)=\overline V(x) u(x)  \ \ \mbox{in} \ \mathbb B_{10}
\quad \mbox{with} \quad \|\overline{V}\|_{L^\infty}\leq M \quad
\mbox{and} \quad \|u\|_{L^\infty}\leq C_0. \label{nor}
\end{equation}
We also assume that $\sup_{|x|\leq 1}|u(x)|\geq 1$ and $M>1$. To the
best of our knowledge, the explicit vanishing order as Theorem
\ref{th1} seems to be unknown for higher order elliptic equations.
By exploiting this variant of frequency function, we are able to
obtain the following theorem.
\begin{theorem}
Assume that $\overline V(x)\in L^\infty(\mathbb B_{10})$ and $n\geq
4m$. Under the assumption in (\ref{nor}), the maximal order of
vanishing of $u$ in (\ref{nor}) is less than $CM$, where $C$ depends
on $n$, $m$ and $C_0$. \label{th2}
\end{theorem}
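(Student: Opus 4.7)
The plan is to transplant the strategy of Theorem~\ref{th1} --- variant frequency function, monotonicity, three-ball inequality, propagation of smallness --- to the polyharmonic setting. First I recast (\ref{nor}) as a second-order system: setting $u_k=(-\lap)^k u$ for $0\le k\le m-1$, equation~(\ref{nor}) becomes
$$-\lap u_k=u_{k+1}\ (0\le k\le m-2),\qquad -\lap u_{m-1}=\overline V\, u_0.$$
Following Section~2, I next build a variant frequency function using a volume rather than boundary version of the denominator. Because $u_k\sim|x|^{-2k}u$ near the origin in the model harmonic case, I introduce scale-corrected weights and set
$$H(r)=\int_{B_r}\sum_{k=0}^{m-1}|x|^{-2k}|u_k|^2\,dx,\qquad D(r)=\int_{B_r}\sum_{k=0}^{m-1}|x|^{-2k}|\nabla u_k|^2\,dx,$$
and $N(r)=rD(r)/H(r)$. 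The hypothesis $n\ge 4m$ is used here to make the iterated Hardy/Sobolev-type inequalities that control lower-order terms by $D(r)$ valid for every index $0\le k\le m-1$.

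Monotonicity is the core analytic step. Differentiating $\log N(r)$, applying the divergence theorem to each $\int|\nabla u_k|^2$, substituting $\lap u_k=-u_{k+1}$ (and $\lap u_{m-1}=-\overline V u_0$), and absorbing the resulting cross-terms via Cauchy--Schwarz and the weighted Hardy-type inequalities, I expect the differential inequality
$$\bigl(\log N(r)\bigr)'\ge -C(1+M),$$
equivalently that $e^{C(1+M)r}N(r)$ is nondecreasing on $(0,2]$, giving $N(r)\le C(1+M)$ throughout $B_1$. Integrating $(\log H)'\approx N(r)/r$ between three radii then yields the $L^2$ three-ball inequality
$$H(r_2)\le e^{C(1+M)}\,H(r_1)^{\alpha}H(r_3)^{1-\alpha},$$
with $\alpha\in(0,1)$ depending only on the ratios of the $r_i$. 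Interior Calder\'on--Zygmund estimates for the polyharmonic system (with $\|\overline V\|_\infty\le M$) upgrade this to an $L^\infty$ three-ball inequality for $u$ alone. Chaining the latter along a geometric sequence of radii from $B_1$, where $\sup|u|\ge 1$, down to $B_r$, using $\|u\|_\infty\le C_0$ on $B_{10}$, produces $\sup_{B_r}|u|\ge a_1 r^{a_2 M}$, which is equivalent to the claimed maximal vanishing order $CM$.

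The main obstacle is the monotonicity step, and it is also where the degradation from the $\sqrt M$ of Theorem~\ref{th1} to the present $M$ appears. The potential $\overline V$ enters only in the last equation, but in order to close the monotonicity one must simultaneously absorb both the cross-coupling terms between adjacent $u_k$ and $u_{k+1}$ and the single $\overline V u_0 u_{m-1}$ term into $D(r)$ with a clean linear-in-$M$ cost. This requires carefully balancing the weights $|x|^{-2k}$ across components so that every coupling term is controlled by an arbitrarily small multiple of $D(r)$ plus $(1+M)H(r)$; conceptually one pays one power of a Hardy-type constant per coupling step, and for $\overline V$ itself this converts what would be $\sqrt M$ in the second-order case into the full factor $M$ here.
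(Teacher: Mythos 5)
Your overall architecture (reduction to a second-order system, a frequency function, a three-ball inequality, propagation of smallness) matches the paper's, but the crucial monotonicity step as you state it is too weak, and the deduction you draw from it is unjustified. You claim $(\log N(r))'\ge -C(1+M)$, i.e.\ that $e^{C(1+M)r}N(r)$ is nondecreasing, and from this you conclude $N(r)\le C(1+M)$ on $\mathbb B_1$. That conclusion does not follow: monotonicity only gives $N(r)\le e^{C(1+M)(R-r)}N(R)$ for $r<R$, so even granting a bound on $N$ at some fixed scale (which is itself not available a priori --- converting the normalization $\sup_{\mathbb B_1}|u|\ge 1$, $\|u\|_{L^\infty}\le C_0$ into a frequency bound is exactly what the three-ball/propagation machinery is needed for), you would only get $N(r)\le Ce^{CM}$. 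More fundamentally, a monotonicity statement whose \emph{exponential rate} depends on $M$ is precisely the Garofalo--Lin-type result that the paper explicitly says cannot yield a polynomial-in-$M$ vanishing order: when you integrate $H'/H$ over $[r_1,2r_2]$ and over $[2r_2,r_3]$ and eliminate the frequency at the middle radius, the factors $e^{\pm CM}$ multiply the ratio of logarithmic increments, so the exponents in the resulting three-ball inequality, and hence the vanishing order, come out exponential in $M$ rather than $CM$.

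What the paper actually proves (Lemma \ref{mono2}) is that $\exp(Cr)\bigl(N(r)+\alpha(\|\overline V\|_{L^\infty}+1)+(\|\overline V\|_{L^\infty}+1)^2\bigr)$ is nondecreasing with $C=C(n,m)$ \emph{independent of} $M$: the potential enters additively, not in the rate. This is achieved by keeping the weight $(r^2-|x|^2)^\alpha$ in $H(r)$ (so no boundary terms appear and the free parameter $\alpha$, later optimized at $\alpha\sim\|\overline V\|_{L^\infty}$ in Lemma \ref{hhh}, absorbs part of the coupling), and by building $I(r)$ so that it already contains the zero-order coupling terms; the cross terms $\int(\nabla u_i\cdot x)u_{i+1}$ and $\int(\nabla u_m\cdot x)\overline V u_1$ are then absorbed by Cauchy--Schwarz into $CI(r)+C(\alpha v+v^2)H(r)$ with $v=\|\overline V\|_{L^\infty}+1$, which is where the linear-in-$M$ (rather than $\sqrt M$) cost genuinely originates. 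Your alternative design with singular weights $|x|^{-2k}$ and a Dirichlet-type $D(r)$ is never carried through --- the differentiation and absorption are only asserted --- and there is no indication it would produce an $M$-independent rate; moreover, the hypothesis $n\ge 4m$ is used in the paper not for Hardy-type inequalities but in the $W^{2m,p}$/Sobolev bootstrap that yields the $L^\infty$ three-ball inequality (Lemmas \ref{afa} and \ref{tir2}), in particular to ensure $r^{n/2-2m}\le 1$ when passing from the $L^2$ quantity $h$ back to $\|u\|_{L^\infty}$. As written, your argument would at best give a vanishing order exponential in $M$, not the claimed $CM$.
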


Unlike the Laplacian operator in (\ref{kenig}), new difficulties
arise since some kind of ``symmetry" is lost for higher order
elliptic equations. Our idea is to break the higher order elliptic
equations into a system of semilinear equations. However, it still
does not give the most desirable result as Theorem \ref{th1}. We
also develop a $L^\infty$-version of Hadamard's three-ball theorem
by exploring $W^{2m, p}$ estimates for higher order elliptic
equations (see section 3 for the details). Compared with the
frequency function argument, it seems to more difficult to obtain
the explicit vanishing order of solutions for higher order elliptic
equations by Carleman estimates.

The quantitative uniqueness has applications in mathematical
physics. For instance, the vanishing order of solutions plays an
important role in \cite{BK} on Anderson localization for the
Bernoulli model. Suppose that $u$ is a solution of
\begin{equation}
-\lap u=Vu \quad \mbox{in} \ \ \mathbb R^n, \label{bkk}
\end{equation}
where $\|V\|_{L^\infty}\leq 1, \ \|u\|_{L^\infty}\leq C_0 \
\mbox{and} \ u(0)=1$. Let
$$ M(R)=\inf_{|x_0|=R}\sup_{\mathbb B_1(x_0)}|u(x)| $$
for $R$ large. By using the result of (\ref{like}), Bourgain and
Kenig in \cite{BK} showed that
$$ M(R)\geq C\exp(-CR^{\frac{4}{3}}\log R),$$
where $C$ depends on $n$ and $C_0$. We can consider a similar
quantitative unique continuation problem as (\ref{bkk}) for higher
order elliptic equations. Suppose that $u$ is a solution to
\begin{equation}
(-\lap)^{m} u=\overline Vu \quad \mbox{in} \ \ \mathbb R^n,
\label{bkh}
\end{equation}
where $\|\overline V\|_{L^\infty}\leq 1, \ \|u\|_{L^\infty}\leq C_0
\ \mbox{and} \ u(0)=1$. Theorem \ref{th2} implies that following
corollary for higher order elliptic equations.
\begin{corollary}
Let $u$ be a solution to (\ref{bkh}) and $n\geq 4m$. Then
$$ M(R)\geq C\exp(-CR^{2m}\log R),$$
where $C$ depends on $n$, $m$ and $C_0$. \label{cor2}
\end{corollary}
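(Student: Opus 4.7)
The plan is to follow the Bourgain–Kenig rescaling scheme verbatim, using Theorem \ref{th2} as the quantitative uniqueness ingredient in place of the Kenig bound $\beta(M)=M^{2/3}$ that produces the exponent $4/3$ in the second order case.

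Fix $x_0\in\mathbb R^n$ with $|x_0|=R$, $R$ large. I would rescale by setting $\rho=10R$ and $v(y)=u(\rho y)$. A direct computation gives
\begin{equation*}
(-\lap)^m v(y)=\rho^{2m}\,\overline V(\rho y)\,v(y)=:\widetilde V(y)v(y),\qquad \|\widetilde V\|_{L^\infty(\mathbb B_{10})}\le\rho^{2m}\le CR^{2m}=:M,
\end{equation*}
while $\|v\|_{L^\infty(\mathbb B_{10})}\le C_0$, and $\sup_{|y|\le 1}|v(y)|\ge |v(0)|=|u(0)|=1$. Hence $v$ lies in the normalized model (\ref{nor}) with potential bound $M\sim R^{2m}$, and Theorem \ref{th2} is applicable to $v$.

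Next I would extract from the proof of Theorem \ref{th2} the two quantitative companions of the vanishing-order bound: the $L^\infty$ three-ball inequality of Hadamard type (stated in the introduction as an outcome of the $W^{2m,p}$ argument in Section 3), and the ensuing doubling inequality of the form $\sup_{\mathbb B_{2r}(y_\ast)}|v|\le e^{CM}\sup_{\mathbb B_r(y_\ast)}|v|$ for $y_\ast$ inside the unit ball and $r$ small. Setting $y_0=x_0/\rho$, so that $|y_0|=1/10$, I would first propagate the normalization $|v(0)|=1$ along a chain of $O(1)$ overlapping balls joining $0$ to $y_0$ inside $\mathbb B_2$, using the three-ball inequality at each step; this gives a lower bound
\begin{equation*}
\sup_{\mathbb B_{r_0}(y_0)}|v|\ge e^{-CM}
\end{equation*}
for some fixed $r_0>0$ depending only on $n$, $m$, $C_0$.

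I would then iterate the doubling inequality from radius $r_0$ down to radius $1/\rho=1/(10R)$, which requires roughly $\log(\rho r_0)$ dyadic steps, each costing a factor $e^{CM}$. This produces
\begin{equation*}
\sup_{\mathbb B_{1/(10R)}(y_0)}|v|\ge e^{-CM}\Bigl(\frac{1}{10Rr_0}\Bigr)^{CM}\ge C\exp\bigl(-CR^{2m}\log R\bigr).
\end{equation*}
Undoing the rescaling, $\sup_{\mathbb B_1(x_0)}|u|=\sup_{\mathbb B_{1/(10R)}(y_0)}|v|$, and taking the infimum over $|x_0|=R$ yields the claimed bound for $M(R)$. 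The only step that requires real care is the first one: verifying that the three-ball inequality and doubling estimate extracted from the frequency argument for the higher order system in Section 3 come with the exponential dependence $e^{CM}$ (linear in $M$), since this is what ultimately feeds the exponent $2m$ rather than something worse. Once that linear-in-$M$ dependence is in hand, the chain-of-balls propagation and dyadic iteration are completely standard and mirror the $m=1$ computation of Bourgain and Kenig.
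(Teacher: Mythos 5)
Your proposal is correct in substance and rests on the same Bourgain--Kenig rescaling idea as the paper, but the execution differs in where the rescaling is centered and, consequently, in how much of the quantitative-uniqueness machinery has to be re-run. The paper recenters at $x_0$: it sets $u_R(x)=u(Rx+x_0)$, so the ball $\mathbb B_1(x_0)$ becomes the ball $\mathbb B_{1/R}$ centered at the origin of the rescaled variable, the normalization $\sup_{|x|\le 1}|u_R|\ge 1$ is supplied for free by the point $-x_0/R$ on the unit sphere where $u_R=u(0)=1$, and the explicit estimate $\|u_R\|_{L^\infty(\mathbb B_{r_1})}\ge K_8 r_1^{K_9 M}$ from the proof of Theorem \ref{th2}, applied with $r_1=1/R$ and $M=R^{2m}$, gives the corollary in one line. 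You instead keep the origin fixed ($v(y)=u(10Ry)$), so the target ball sits at $y_0=x_0/(10R)$ with $|y_0|=1/10$, and you must transport the normalization $|v(0)|=1$ to $y_0$ by a chain of three-ball inequalities and then descend from scale $r_0$ to scale $1/(10R)$; this amounts to re-proving the propagation-of-smallness half of Theorem \ref{th2} at an off-center point, which is exactly what the paper's recentering trick avoids (and the trick also sidesteps the minor point that Theorem \ref{th2}'s quantitative conclusion is formulated for balls centered at the origin, while the ball relevant to $M(R)$ is centered at $x_0$). One caveat on your descent step: the uniform doubling inequality $\sup_{\mathbb B_{2r}(y_\ast)}|v|\le e^{CM}\sup_{\mathbb B_r(y_\ast)}|v|$ is nowhere stated or proved in the paper; it does hold here, but the cleaner route within the paper's toolkit is a single application of Lemma \ref{tir2} centered at $y_0$ with inner radius $r_1=1/(10R)$, whose exponent ratio $\beta_3/\alpha_3\sim C\log(1/r_1)\sim C\log R$ converts $\sup_{\mathbb B_{r_0}(y_0)}|v|\ge e^{-CM}$ and $\|v\|_{L^\infty}\le C_0$ directly into $\sup_{\mathbb B_{1/(10R)}(y_0)}|v|\ge \exp(-CM\log R)=\exp(-CR^{2m}\log R)$ --- precisely the final step of the proof of Theorem \ref{th2}, re-centered. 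With either route, the linear-in-$M$ dependence in the exponentials of Lemmas \ref{hhh} and \ref{tir2} (valid since $n\ge 4m$) is what yields the exponent $2m$, so your final bound and constants are correct.
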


An easy consequence of Theorem \ref{th2} is a strong unique
continuation result for higher order elliptic equations when $n\geq
4m$. Due to the conclusion in Theorem \ref{th2}, the solutions will
not vanish of infinite order when $n\geq 4m$. Without using the
conclusion of Theorem \ref{th2}, we are also able to give a proof
based on this variant of frequency function.  We refer to, e.g.
\cite{CG}, \cite{LSW}, for the strong unique continuation results of
higher order elliptic equations by using Carleman estimates. Assume
that
\begin{equation}
(-\lap )^m u(x)=\overline{V}(x)u(x) \quad \quad \mbox{in} \ \Omega,
\label{fff2}
\end{equation}
where $ \overline{V}(x)\in L^{\infty}_{loc}(\Omega)$.  A function
$u\in L^2_{loc}(\Omega)$ is said to vanish of infinite order at some
point $x_0\in \Omega$ if for $R>0$ sufficiently small,
\begin{equation}
\int_{\mathbb B_{R}(x_0)}u^2 \,dx= O(R^N) \label{vani}
\end{equation}
for every positive integer $N$. We are able to establish the
following theorem.
\begin{theorem} If $u$ in (\ref{fff2}) vanishes of infinite order at some point $x_0\in \Omega$, then $u\equiv 0$ in $\Omega$.
\label{th3}
\end{theorem}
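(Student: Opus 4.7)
The plan is to apply the variant frequency-function machinery for the system form of the higher order equation developed in Section 3, and then pass from a doubling-type inequality to the vanishing conclusion by a standard connectedness argument. First, I would reduce to $x_0=0$ and localize to a small ball $\mathbb B_{r_0}\subset \Omega$. As in the treatment of (\ref{nor}), rewrite (\ref{fff2}) as the first order system $-\lap u_j=u_{j+1}$ for $j=0,\ldots,m-2$ and $-\lap u_{m-1}=\overline V u_0$, where $u_0=u$ and $u_j=(-\lap)^j u$. With the vector $U=(u_0,\ldots,u_{m-1})$, introduce the analogs $H(r)$ and $D(r)$ and the variant frequency $N(r)$ constructed in Section 3 for such a system.

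Second, I would invoke the monotonicity of $N(r)$ (or of $e^{Cr}N(r)$) produced by the variant frequency approach. Because $\overline V\in L^\infty_{loc}$, for any fixed admissible radius $r_0$ the quantity $N_0:=N(r_0)$ is finite, and unwinding the monotonicity yields a lower bound of the form
\[
H(r)\ \geq\ H(r_0)\Bigl(\tfrac{r}{r_0}\Bigr)^{CN_0}\qquad \text{for all } 0<r\leq r_0,
\]
so $H$ can decay at most polynomially. On the other hand, elliptic regularity applied iteratively to the Poisson-type equations $-\lap u_j=u_{j+1}$ together with (\ref{vani}) gives $\int_{\mathbb B_r}u_j^2\,dx=O(r^N)$ for each $j$ and every positive integer $N$, and hence $H(r)=O(r^N)$ for every $N$. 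Combining the two bounds forces $H(r_0)=0$, and therefore $u\equiv 0$ on $\mathbb B_{r_0}$.

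Finally, I would upgrade this local conclusion to all of $\Omega$. Let $S\subset \Omega$ denote the set of points at which $u$ vanishes of infinite order in the sense of (\ref{vani}). The set $S$ is closed by continuity of the map $y\mapsto \int_{\mathbb B_R(y)}u^2\,dx$ together with the uniformity of the condition across every $N$. The preceding step shows that $S$ is also open, since any $y\in S$ admits a ball on which $u\equiv 0$ and thus every point of that ball belongs to $S$. As $\Omega$ is connected and $x_0\in S$, we conclude $S=\Omega$, which gives $u\equiv 0$ in $\Omega$.

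The main obstacle I expect is verifying that the variant frequency function for the vectorial system inherits the correct monotonicity in the presence of the coupling $-\lap u_j=u_{j+1}$ and the bottom perturbation $\overline V u_0$. The Rellich-type identity used to differentiate $N(r)$ produces boundary and cross terms mixing the different components $u_j$, and one has to choose the relative weights in $H$ and $D$, and possibly an $e^{Cr}$ correction factor, carefully enough that these error terms are absorbed by the main term or produce only a lower order contribution. Once this monotonicity is in hand, the remainder of the argument is essentially soft.
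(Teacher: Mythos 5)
Your proposal follows essentially the same route as the paper's proof: the same decomposition of (\ref{fff2}) into the system (\ref{syst}), the same unweighted variant frequency function with the monotonicity of Lemma \ref{mono3}, and the same rescaled $W^{2m,2}$ estimate (\ref{rrr}) to transfer information between $u$ and the components $u_i$; the paper merely packages the endgame as a doubling inequality for $u$ iterated against (\ref{vani}), which is equivalent to your polynomial lower bound on $H(r)$ contradicting infinite-order vanishing of all components. The only caveat is your justification that the set $S$ is closed: closedness should come from the fact that the local result kills $u$ on a ball whose radius depends only on the distance to $\partial\Omega$ (so balls around nearby points of $S$ eventually cover the limit point), not from continuity of $y\mapsto\int_{\mathbb B_R(y)}u^2\,dx$ ``uniformly in $N$''---a standard fix, and the paper itself is even terser about this globalization step.
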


The outline of the paper is as follows. Section 2 is devoted to
obtaining the vanishing order of Schr\"{o}dinger equations. In
Section 3, the vanishing order of higher order elliptic equations is
shown. In section 4, we obtain the strong unique continuation for
higher order elliptic equations. In the whole paper, we will use
various letters, such as $C$, $D$, $E$, $K$, to denote the positive
constants which may depend $n$ and $m$, even if they are not
explicitly stated. They may also vary from line to line. Especially,
the letters do not depend on $V$ in section 2 and $\overline{V}$ in
section 3.
\section{Schr\"{o}dinger equations}

In this section, we focus on the maximal vanishing order of
solutions in (\ref{kenig}). Let $x_0\in \mathbb B_1$. We define
\begin{equation}
H_{x_0}(r)=\int_{\mathbb B_r(x_0)} u^2(r^2-|x-x_0|^2)^\alpha \,dx.
\label{nota}
\end{equation}
The value of the constant $\alpha>0$ will be determined later on. We
can assume that $\mathbb B_r(x_0)\subset \mathbb B_{10}$ by choosing
$r$ suitable small. Without loss of generality, we may assume
$x_0=0$ and denote $\mathbb B_r(0)$ as $\mathbb B_r$, that is,
$$H(r)=\int_{\mathbb
B_r} u^2(r^2-|x|^2)^\alpha \,dx. $$ The advantage of the weight
function $(r^2-|x|^2)^\alpha$ in the integration is that the
boundary term will not appear whenever we use divergence theorem.
Moreover, the value of $\alpha$ will help reduce the  order of
vanishing. The function in (\ref{nota}) appeared in \cite{Ku1} for
the study of vortex of Ginzburg-Landau equations. We will also omit
the integration on $\mathbb B_r$ when it is clear from the context.
Taking the derivative with respect to $r$ for $H(r)$, we get
\begin{eqnarray*}
 H'(r)&=&2\alpha r\int
u^2(r^2-|x|^2)^{\alpha-1}\,dx.
\end{eqnarray*}
Because of the presence of the weight function, as we mentioned
before, there are no terms involving integration on the boundary.
One of it's advantage is that it simplifies our calculations in the
following. Furthermore,
\begin{eqnarray*} H'(r)&=&\frac{2\alpha}{r}\int
u^2(r^2-|x|^2)^{\alpha}\,dx +\frac{2\alpha}{r}\int
u^2(r^2-|x|^2)^{\alpha-1}|x|^2\,dx
\\
 &=&\frac{2\alpha}{r}H(r)-\frac{1}{r}\int u^2
x\cdot\nabla(r^2-|x|^2)^{\alpha}\,dx.
\end{eqnarray*}
Applying the divergence theorem for the second term in the right
hand side of the latter equality, we get
\begin{equation}
H'(r)=\frac{2\alpha+n}{r} H(r)+\frac{1}{(\alpha+1)r}I(r),
\label{der}
\end{equation}
where
\begin{equation}
I(r)=2(\alpha+1)\int(x\cdot \nabla u) u(r^2-|x|^2)^{\alpha}\,dx.
\label{rev}
\end{equation}
Using the divergence theorem again for $I(r)$, it follows that
\begin{eqnarray}
I(r)&=&-\int u\nabla u\cdot
\nabla(r^2-|x|^2)^{\alpha+1}\,dx \nonumber \\
&=&\int |\nabla u|^2(r^2-|x|^2)^{\alpha+1}\,dx+\int V
u^2(r^2-|x|^2)^{\alpha+1}\,dx, \label{dive}
\end{eqnarray}
where we perform integration by parts and use the equation
(\ref{kenig}) in the last equality.

 We define our variant of frequency function as
\begin{equation}
N(r)=\frac{I(r)}{H(r)}.
\end{equation}
Next we are going to study the monotonicity property of this special
type of frequency function $N(r)$. We are able to obtain the
following result.

\begin{lemma}
There exists a constant $C$ depending only on $n$ such that
$$ N(r)+C \|V\|_{W^{1 ,\infty}} r^2$$
is nondecreasing function of $r\in (0, 1)$. \label{mono}
\end{lemma}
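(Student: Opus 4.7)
The plan is to compute $N'(r)$ directly using the two representations of $I(r)$ given in \eqref{rev} and \eqref{dive}, and to show that the logarithmic derivative is nonnegative up to a controlled error of size $\|V\|_{W^{1,\infty}}\cdot r$. The key advantage of the weight $(r^2-|x|^2)^\alpha$ is that all integrations by parts produce no boundary contributions.

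First I would differentiate $I(r)$ using \eqref{dive}. Since $(r^2-|x|^2)^{\alpha+1}$ vanishes on $\partial \mathbb B_r$, we get
\[
rI'(r)=2(\alpha+1)r^2\!\int |\nabla u|^2(r^2-|x|^2)^{\alpha}\,dx+2(\alpha+1)r^2\!\int Vu^2(r^2-|x|^2)^{\alpha}\,dx.
\]
Next I would invoke a Rellich--Pohozaev identity: multiply $\Delta u = Vu$ by $(x\cdot\nabla u)(r^2-|x|^2)^{\alpha+1}$ and integrate by parts in both the $\Delta u$ factor and the arising $x\cdot\nabla(|\nabla u|^2)$ term. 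This rewrites the awkward integral $r^2\int|\nabla u|^2(r^2-|x|^2)^\alpha dx$ in terms of: (i) $\int(x\cdot\nabla u)^2(r^2-|x|^2)^\alpha dx$, (ii) $\int|\nabla u|^2(r^2-|x|^2)^{\alpha+1}dx$, and (iii) the cross term $\int Vu(x\cdot\nabla u)(r^2-|x|^2)^{\alpha+1}dx$. Combining with \eqref{der}, I would then form
\[
rN'(r)=\frac{rI'(r)-(2\alpha+n)I(r)}{H(r)}-\frac{N(r)^2}{\alpha+1}.
\]

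The Rellich--Pohozaev substitution produces a term $4(\alpha+1)\int(x\cdot\nabla u)^2(r^2-|x|^2)^\alpha dx$, and the Cauchy--Schwarz inequality applied to the $u\cdot(x\cdot\nabla u)$ representation of $I(r)$ in \eqref{rev} gives
\[
\int (x\cdot\nabla u)^2(r^2-|x|^2)^\alpha dx\ \geq\ \frac{I(r)^2}{4(\alpha+1)^2 H(r)}.
\]
This term therefore dominates $N^2/(\alpha+1)$ exactly, so the two cancel, leaving only the $V$-dependent remainder. The structural miracle I would expect (and need to check) is that after integrating by parts in $\int Vu(x\cdot\nabla u)(r^2-|x|^2)^{\alpha+1}dx$ via $2u(x\cdot\nabla u)=x\cdot\nabla(u^2)$, the $|x|^2(r^2-|x|^2)^\alpha$ pieces collapse and one is left with the clean remainder
\[
\frac{1}{H(r)}\left[\int u^2\,(x\cdot\nabla V)(r^2-|x|^2)^{\alpha+1}dx+2\!\int Vu^2 (r^2-|x|^2)^{\alpha+1}dx\right].
\]

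Finally I would bound this remainder using $|x\cdot\nabla V|\leq r\|\nabla V\|_{L^\infty}$, $|V|\leq\|V\|_{L^\infty}$, and $(r^2-|x|^2)^{\alpha+1}\leq r^2(r^2-|x|^2)^\alpha$, to obtain
\[
rN'(r)\geq -C r^2\,\|V\|_{W^{1,\infty}},
\]
hence $N'(r)+2C\|V\|_{W^{1,\infty}}r\geq 0$, which is exactly the claimed monotonicity of $N(r)+C\|V\|_{W^{1,\infty}}r^2$. The main obstacle is the bookkeeping in the Rellich--Pohozaev and integration-by-parts steps: one must carefully track the algebraic cancellations so that the $(x\cdot\nabla u)^2$ piece absorbs $N^2/(\alpha+1)$ and the $|x|^2(r^2-|x|^2)^\alpha$ terms vanish, leaving only a remainder that is linear in $\|V\|_{W^{1,\infty}}$ rather than polynomial in $N$. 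Choosing $\alpha$ simply as a fixed positive number (e.g.\ $\alpha=1$) is enough for the monotonicity itself; its value will play a role later in optimizing the vanishing order.
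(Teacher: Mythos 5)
Your proposal is correct and follows essentially the same route as the paper's proof: differentiate $I(r)$ from the representation \eqref{dive}, perform the Rellich--Pohozaev-type integration by parts using the equation, treat the cross term via $2u(x\cdot\nabla u)=x\cdot\nabla(u^2)$ (which is exactly where $V\in W^{1,\infty}$ enters), and cancel $N^2/(\alpha+1)$ against the $(x\cdot\nabla u)^2$ term by Cauchy--Schwarz. The ``clean remainder'' you flag as needing verification does indeed come out exactly as you wrote (the $|x|^2(r^2-|x|^2)^\alpha$ pieces cancel), and in particular it is independent of $\alpha$, which is essential since $\alpha$ is later taken to be $\sqrt{M}$; the paper bounds the same terms slightly more crudely, getting the constant $3n+5$ instead of your sharper one.
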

\begin{proof}
To consider the monotonicity of $N(r)$, we shall consider the
derivative of $I(r)$. By taking the derivative for $I(r)$ in
(\ref{dive}) with respect to $r$,
\begin{eqnarray*}
I'(r)=2(\alpha+1)r\int |\nabla
u|^2(r^2-|x|^2)^{\alpha}\,dx+2(\alpha+1)r\int V
u^2(r^2-|x|^2)^{\alpha}\,dx.
\end{eqnarray*}
We simply the first term in the right hand side of the latter
equality. It yields that
\begin{eqnarray*}
I'(r)&=&\frac{2(\alpha+1)}{r} \int |\nabla
u|^2(r^2-|x|^2)^{\alpha+1}\,dx-\frac{1}{r}\int x\cdot\nabla
(r^2-|x|^2)^{\alpha+1}|\nabla u|^2\,dx \\
&&+ 2(\alpha+1)r\int Vu^2(r^2-|x|^2)^\alpha\,dx.
\end{eqnarray*}
Integrating by parts for the second term in the right hand side of
the last equality gives that
\begin{eqnarray*}
 I'(r)&=&\frac{2(\alpha+1)+n}{r} \int
|\nabla u|^2(r^2-|x|^2)^{\alpha+1}\,dx+\sum_{j,l=1}^n\frac{2}{r}\int
\partial_ju
\partial_{jl}ux_l(r^2-|x|^2)^{\alpha+1} \,dx \\
&&+ 2(\alpha+1)r\int u^2V(r^2-|x|^2)^\alpha\,dx.
\end{eqnarray*}

We do further integration by parts for the second term  in the right
hand side of the last inequality with respect to $j$th derivative.
It follows that
\begin{eqnarray*}
I'(r)&=&\frac{2(\alpha+1)+n}{r} \int |\nabla
u|^2(r^2-|x|^2)^{\alpha+1}\,dx-\frac{2}{r}\int \lap u
(\nabla u\cdot x)(r^2-|x|^2)^{\alpha+1} \,dx \\
&&-\frac{2}{r}\int |\nabla
u|^2(r^2-|x|^2)^{\alpha+1}\,dx+\frac{4(\alpha+1)}{r}\int (\nabla
u\cdot x)^2(r^2-|x|^2)^\alpha\,dx\\&&+ 2(\alpha+1)r\int
Vu^2(r^2-|x|^2)^\alpha\,dx \\&=&\frac{2\alpha+n}{r} \int |\nabla
u|^2(r^2-|x|^2)^{\alpha+1}\,dx-\frac{2}{r}\int V u
(\nabla u\cdot x)(r^2-|x|^2)^{\alpha+1} \,dx \\
&&+\frac{4(\alpha+1)}{r}\int (\nabla u\cdot
x)^2(r^2-|x|^2)^\alpha\,dx+ 2(\alpha+1)r\int
Vu^2(r^2-|x|^2)^\alpha\,dx,
\end{eqnarray*}
where we have used the equation (\ref{kenig}) in the latter
equality. We want to interpret the first term in the the right hand
side of the last equality in terms of $I(r)$. In view of
(\ref{dive}), we have
\begin{eqnarray*}
I'(r)&= &\frac{2\alpha+n}{r}I(r)-\frac{2\alpha+n}{r}\int
Vu^2(r^2-|x|^2)^{\alpha+1}\,dx-\frac{2}{r}\int Vu (\nabla u\cdot
x)(r^2-|x|^2)^{\alpha+1} \,dx \\
&&+\frac{4(\alpha+1)}{r}\int (\nabla u\cdot
x)^2(r^2-|x|^2)^\alpha\,dx+2(\alpha+1)r\int
Vu^2(r^2-|x|^2)^\alpha\,dx.
\end{eqnarray*}
We breaks down the second term in the last equality as
\begin{eqnarray*}
\frac{2\alpha+n}{r}\int Vu^2(r^2-|x|^2)^{\alpha+1}\,dx
&=&(2\alpha+n)r \int Vu^2(r^2-|x|^2)^{\alpha}\,dx \\
\\&&-\frac{2\alpha+n}{r}\int Vu^2(r^2-|x|^2)^{\alpha}|x|^2\,dx.
\end{eqnarray*}
Substituting the latter equality to $I'(r)$, one obtains
\begin{eqnarray*}
I'(r)&=& \frac{2\alpha+n}{r}I(r)+(2-n)r\int
Vu^2(r^2-|x|^2)^{\alpha}\,dx+\frac{2\alpha+n}{r}\int
Vu^2(r^2-|x|^2)^{\alpha}|x|^2\,dx \\
&&+\frac{4(\alpha+1)}{r}\int (\nabla  u\cdot
x)^2(r^2-|x|^2)^\alpha\,dx-\frac{2}{r}\int Vu (\nabla u\cdot
x)(r^2-|x|^2)^{\alpha+1} \,dx.
\end{eqnarray*}
Applying the divergence theorem for the last term in the right hand
side of the latter equality and considering the fact that $V\in
W^{1, \infty}$, we arrive at
\begin{eqnarray*}
I'(r)&=& \frac{2\alpha+n}{r}I(r)+(2-n)r\int
Vu^2(r^2-|x|^2)^{\alpha}\,dx+\frac{2\alpha+n}{r}\int
Vu^2(r^2-|x|^2)^{\alpha}|x|^2\,dx \\
&&+\frac{4(\alpha+1)}{r}\int (\nabla u\cdot
x)^2(r^2-|x|^2)^\alpha\,dx+\frac{1}{r}\int (\nabla V\cdot x)
u^2(r^2-|x|^2)^{\alpha+1}\, dx \\ &&+\frac{n}{r}\int
Vu^2(r^2-|x|^2)^{\alpha+1}\,dx -\frac{2(\alpha+1)}{r}\int
Vu^2(r^2-|x|^2)^{\alpha}|x|^2\,dx.
\end{eqnarray*}
Combining the third term and seventh term in the right hand side of
the latter equality gives that
\begin{eqnarray*}
I'(r)&\geq& \frac{2\alpha+n}{r}I(r)+(2-n)r\int V
u^2(r^2-|x|^2)^{\alpha}\,dx+\frac{n-2}{r}\int
Vu^2(r^2-|x|^2)^{\alpha}|x|^2\,dx \\
&&+\frac{4(\alpha+1)}{r}\int (\nabla u\cdot
x)^2(r^2-|x|^2)^\alpha\,dx+\frac{1}{r}\int (\nabla V\cdot
x) u^2(r^2-|x|^2)^{\alpha+1}\,dx\\
&&+\frac{n}{r}\int V u^2(r^2-|x|^2)^{\alpha+1}\,dx.
\end{eqnarray*}
By the definition of $H(r)$ in (\ref{nota}) and the assumption that
$0<r<1$, we obtain
\begin{equation}
I'(r)\geq
\frac{2\alpha+n}{r}I(r)-(3n+5)r\|V\|_{W^{1,\infty}}H(r)+\frac{4(\alpha+1)}{r}\int
(\nabla u\cdot x)^2(r^2-|x|^2)^\alpha\,dx. \label{nee}
\end{equation}
In order to find the monotonicity of $N(r)$, it suffices to take the
derivative for $N(r)$ with respect to $r$. Taking $H'(r)$ in
(\ref{der}) and $I'(r)$ in (\ref{nee}) into consideration, we get
\begin{eqnarray*}
N'(r)&=&\frac{I'(r)H(r)-H'(r)I(r)}{H^2(r)} \\
&\geq&{1}/{H^2(r)}\big\{\frac{2\alpha+n}{r}I(r)H(r)-(3n+5)r\|V\|_{W^{1,\infty}}H^2(r)
\\&&+\frac{4(\alpha+1)}{r}\int
(\nabla u\cdot x)^2(r^2-|x|^2)^\alpha\,dx\int
u^2(r-|x|^2)^\alpha\,dx
-\frac{2\alpha+n}{r}I(r)H(r)\\&&-\frac{1}{r(\alpha+1)}I^2(r)\big\} \\
&\geq&
1/H^2(r)\big\{-(3n+5)r\|V\|_{W^{1,\infty}}H^2(r)-\frac{4(\alpha+1)}{r}\big(\int(x\cdot
\nabla u) u(r^2-|x|^2)^{\alpha}\,dx\big)^2\\
&&+\frac{4(\alpha+1)}{r}\int (\nabla u\cdot
x)^2(r^2-|x|^2)^\alpha\,dx\int u^2(r-|x|^2)^\alpha\,dx\big\},
\end{eqnarray*}
where we have used $I(r)$ in (\ref{rev}) in the last inequality. By
Cauchy-Schwarz inequality,  we know
$$\big(\int(x\cdot
\nabla u) u(r^2-|x|^2)^{\alpha}\,dx\big)^2\leq  \int (\nabla u\cdot
x)^2(r^2-|x|^2)^\alpha\,dx\int u^2(r-|x|^2)^\alpha\,dx.
$$
We finally arrive at
$$N'(r)\geq -(3n+5)r\|V\|_{W^{1,\infty}},$$
which implies the conclusion in the lemma.
\end{proof}

Let us compare more about our variant of frequency function and that
in \cite{GL}. Both lead to monotonicity property. Unlike the
monotonicity results in \cite{GL}, the function $V(x)$ is moved away
from the exponential in Lemma \ref{mono} . Our monotonicity result
only relies on the polynomial growth of $V(x)$. More important, the
positive position $C$ and the radius $r$ do not depend on $V$ in
Lemma \ref{mono}. The fact that $r$ is independent of $V$ is crucial
in the propagation of smallness arguments in the proof of Theorem
\ref{th1}. With the help of monotonicity of $N(r)$, we are going to
establish a $L^2$-version of Hadamard's three-ball theorem. For the
variants of Hadamard's three-ball theorem, see e.g. \cite{JL} and
\cite{Ku}. We also want to get rid of the weight function
$(r^2-|x|^2)^\alpha$ in our function $H(r)$. In this process, the
value of $\alpha$ helps reduce the coefficient in the following
three-ball theorem, which provides better vanishing order. This is
another advantage we introduce the weight function. Let
$$h(r)=\int_{\mathbb B_r(x_0)}u^2\,dx.$$ Without loss of
generality, we may assume $x_0=0$. We can easily check that
\begin{equation}
H(r)\leq r^{2\alpha}h(r) \label{kk}
\end{equation}
and
\begin{equation}
h(r)\leq \frac{H(\rho)}{(\rho^2-r^2)^\alpha} \label{kk1}
\end{equation}
for any $0<r<\rho<1$. We are able to obtain the following three-ball
theorem.
\begin{lemma}
Let $0<r_1<r_2<2r_2<r_3<1$. Then
\begin{equation}
h(r_2)\leq
\exp{(C\sqrt{M})}h^{\frac{\alpha_0}{\alpha_0+\beta_0}}(r_1)h^{\frac{\beta_0}{\alpha_0+\beta_0}}(r_3),
\label{three}
\end{equation}
where
$$ \alpha_0=\log\frac{r_3}{2r_2} $$ and $$
\beta_0=\log\frac{2r_2}{r_1}.$$ \label{niu}
\end{lemma}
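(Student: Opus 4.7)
The plan is to derive a three-ball inequality for the weighted integral $H(r)$ via a log-convexity argument based on Lemma~\ref{mono}, convert it to a three-ball inequality for $h(r)$ using (\ref{kk}) and (\ref{kk1}), and finally optimize the parameter $\alpha$ in the definition of $H$ to produce the $\exp(C\sqrt{M})$ factor.

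First, from (\ref{der}) together with the definition $N(r)=I(r)/H(r)$ I would extract the differential identity
$$\frac{d}{dr}\log H(r)=\frac{2\alpha+n}{r}+\frac{N(r)}{(\alpha+1)r}.$$
Setting $t=\log r$ and $F(t)=\log H(e^t)$, this reads $F'(t)=(2\alpha+n)+N(e^t)/(\alpha+1)$. Since Lemma~\ref{mono} asserts that $N(r)+C\|V\|_{W^{1,\infty}}r^2$ is nondecreasing in $r$, the adjusted function
$$\widetilde F(t):=F(t)+\frac{CMe^{2t}}{2(\alpha+1)}$$
has a nondecreasing derivative, so it is convex in $t$. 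Applying the standard three-point convexity inequality to $\widetilde F$, exponentiating, and using $e^{2t}\leq 1$ on $(0,1)$ to absorb the correction, I would obtain for any $0<\rho_1<\rho_2<\rho_3<1$ the inequality
$$H(\rho_2)\leq \exp\!\Big(\tfrac{CM}{\alpha+1}\Big)\,H(\rho_1)^{1-\theta}\,H(\rho_3)^{\theta},\qquad \theta=\tfrac{\log(\rho_2/\rho_1)}{\log(\rho_3/\rho_1)}.$$

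Next I would specialize to $\rho_1=r_1,\ \rho_2=2r_2,\ \rho_3=r_3$, which yields $1-\theta=\alpha_0/(\alpha_0+\beta_0)$ and $\theta=\beta_0/(\alpha_0+\beta_0)$. Using (\ref{kk1}) with $\rho=2r_2$ gives $h(r_2)\leq H(2r_2)/(3r_2^2)^\alpha$, and (\ref{kk}) at $r_1$ and $r_3$ gives $H(r_i)\leq r_i^{2\alpha}h(r_i)$. The key algebraic cancellation is the identity
$$\alpha_0\log r_1+\beta_0\log r_3=(\alpha_0+\beta_0)\log(2r_2),$$
which follows immediately from $\alpha_0+\beta_0=\log(r_3/r_1)$. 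Thanks to this identity, all radius factors collapse and leave only the pure constant $(4/3)^\alpha$, and I would arrive at
$$h(r_2)\leq \exp\!\Big(\tfrac{CM}{\alpha+1}+\alpha\log\tfrac{4}{3}\Big)\,h(r_1)^{\alpha_0/(\alpha_0+\beta_0)}\,h(r_3)^{\beta_0/(\alpha_0+\beta_0)}.$$

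Finally, I would choose $\alpha\sim\sqrt{M}$ so that both $M/(\alpha+1)$ and $\alpha$ are of order $\sqrt{M}$, producing the claimed constant $\exp(C\sqrt{M})$. The main obstacle is balancing two separate sources of loss: the $CMr^2$ tail delivered by Lemma~\ref{mono}, which pushes the exponent up by $M/(\alpha+1)$, and the algebraic blow-up $(4/3)^\alpha$ that appears when converting between the weighted quantity $H$ and the unweighted $h$. It is precisely the freedom to tune $\alpha$ as a function of $M$, rather than fixing it as a dimensional constant, that allows these two errors to be balanced against each other and yields the $\sqrt{M}$ scaling; this is the technical reason the weight $(r^2-|x|^2)^\alpha$ was introduced in the definition of $H$ in the first place.
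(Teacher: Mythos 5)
Your proposal is correct and is essentially the paper's own argument: both start from the identity $H'/H=\frac{2\alpha+n}{r}+\frac{N(r)}{(\alpha+1)r}$ coming from (\ref{der}), use Lemma \ref{mono} to make $\log H$ (after an $O(M r^2/(\alpha+1))$ correction) convex in $\log r$ — the paper does this by integrating over $[r_1,2r_2]$ and $[2r_2,r_3]$ and pivoting at $N(2r_2)$, which is exactly the three-point convexity inequality you invoke — then pass from $H$ to $h$ via (\ref{kk}) and (\ref{kk1}), picking up the $\alpha\log\frac{4}{3}$ loss, and finally choose $\alpha\sim\sqrt{M}$ to balance the two error sources. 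Your explicit cancellation identity $\alpha_0\log r_1+\beta_0\log r_3=(\alpha_0+\beta_0)\log(2r_2)$ is just a tidier way of doing the same radius bookkeeping the paper carries out term by term, so no substantive difference or gap.
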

\begin{proof}
From (\ref{der}), we have
\begin{equation}\frac{H'(r)}{H(r)}=\frac{2\alpha+n}{r}+\frac{1}{(\alpha+1)r}N(r).
\label{Mon}
\end{equation}
Taking integration from $2r_2$ to $r_3$ in the last identity gives
that
\begin{equation}
\log\frac{H(r_3)}{H(2r_2)}=(2\alpha+n)\log\frac{r_3}{2r_2}+\frac{1}{\alpha+1}\int^{r_3}_{2r_2}\frac{N(r)}{r}\,dx.
\label{ggg}
\end{equation}
By the monotonicity result in Lemma \ref{mono}, it follows that
$$\log\frac{H(r_3)}{H(2r_2)}\geq
(2\alpha+n)\log\frac{r_3}{2r_2}+\frac{1}{\alpha+1}(N(2r_2)+C\|V\|_{W^{1,\infty}}r_2^2)\log\frac{r_3}{2r_2}-\frac{C\|V\|_{W^{1,\infty}}}{\alpha+1}r_3^2,$$
that is,
\begin{equation}
\frac{\log\frac{H(r_3)}{H(2r_2)}+\frac{C\|V\|_{W^{1,\infty}}}{\alpha+1}r_3^2}{\log\frac{r_3}{2r_2}}\geq
(2\alpha+n)+\frac{1}{\alpha+1}(N(2r_2)+C\|V\|_{W^{1,\infty}}r_2^2).
\label{es1}
\end{equation}
If we perform similar calculations on (\ref{Mon}) by integrating
from $r_1$ to $2r_2$, we deduce that
\begin{eqnarray*}
\log\frac{H(2r_2)}{H(r_1)}&=&(2\alpha+n)\log\frac{2r_2}{r_1}+\frac{1}{\alpha+1}\int^{2r_2}_{r_1}
\frac{N(r)}{r}\,dr \\
&\leq
&(2\alpha+n)\log\frac{2r_2}{r_1}+\frac{1}{\alpha+1}\log\frac{2r_2}{r_1}(N(2r_2)+C\|V\|_{W^{1,\infty}}r_2^2).
\end{eqnarray*}
Namely,
\begin{equation}
\frac{\log\frac{H(2r_2)}{H(r_1)}}{\log\frac{2r_2}{r_1}}\leq
(2\alpha+n)+\frac{1}{\alpha+1}(N(2r_2)+C\|V\|_{W^{1,\infty}}r^2_2).
\label{es2}
\end{equation}
Combining the inequalities (\ref{es1}) and (\ref{es2}), note that
$\|V\|_{W^{1,\infty}}\leq M$, we conclude that
\begin{equation}
\frac{\log\frac{H(r_3)}{H(2r_2)}+\frac{CM}{\alpha+1}r_3^2}{\log\frac{r_3}{2r_2}}
\geq \frac{\log\frac{H(2r_2)}{H(r_1)}}{\log\frac{2r_2}{r_1}}.
\label{com}
\end{equation}
Thanks to (\ref{kk}) and (\ref{kk1}), we have
\begin{eqnarray*}
\log\frac{H(r_3)}{H(2r_2)}&\leq & \log (r_3^{2\alpha}h(r_3))-\log
(3r_2)^{\alpha}-\log h(r_2) \\
&\leq & 2\alpha \log r_3+\log h(r_3)-2\alpha \log(2r_2)-\log
h(r_2)+\alpha \log\frac{4}{3}.
\end{eqnarray*}
Therefore,
\begin{equation}
\frac{\log\frac{H(r_3)}{H(2r_2)}+\frac{CM}{\alpha+1}r_3^2}{\log\frac{r_3}{2r_2}}
\leq
2\alpha+\frac{\log\frac{h(r_3)}{h(r_2)}}{\log\frac{r_3}{2r_2}}+\frac{\alpha
\log\frac{4}{3}}{\log\frac{r_3}{2r_2}}+\frac{\frac{CM}{\alpha+1}r_3^2}{\log\frac{r_3}{2r_2}}.
\label{nna}
\end{equation}
We conduct the similar calculations as above for
$\log\frac{H(2r_2)}{H(r_1)}$ in (\ref{es2}). Using (\ref{kk}) and
(\ref{kk1}) again,
\begin{eqnarray*}
\log\frac{H(2r_2)}{H(r_1)}&\geq& \log ((3r_2^2)^{\alpha}h(r_2))-\log
r^{2\alpha}_1-\log h(r_1) \\
&\geq & 2\alpha\log(2r_2)-\alpha\log\frac{4}{3}+\log
h(r_2)-2\alpha\log r_1-\log h(r_1).
\end{eqnarray*}
So we obtain that
\begin{equation}
\frac{\log\frac{H(2r_2)}{H(r_1)}}{\log\frac{2r_2}{r_1}}\geq
2\alpha-\frac{\alpha\log\frac{4}{3}}{\log\frac{2r_2}{r_1}}+\frac{\log\frac{h(r_2)}{h(r_1)}}
{\log\frac{2r_2}{r_1}}. \label{fin}
\end{equation}
Taking (\ref{com}), (\ref{nna}) and (\ref{fin}) into account, we get
$$\frac{\log\frac{h(r_3)}{h(r_2)}}{\log\frac{r_3}{2r_2}}+\frac{\alpha
\log\frac{4}{3}}{\log\frac{r_3}{2r_2}}+\frac{\frac{CM}{\alpha+1}r_3^2}{\log\frac{r_3}{2r_2}}\geq
-\frac{\alpha\log\frac{4}{3}}{\log\frac{2r_2}{r_1}}+\frac{\log\frac{h(r_2)}{h(r_1)}}
{\log\frac{2r_2}{r_1}}.
$$
Namely,
$$(\alpha_0+\beta_0)\alpha\log\frac{4}{3}+\beta_0\big(\log\frac{h(r_3)}{h(r_2)}+\frac{CM}{\alpha+1}r_3^2\big)\geq
\alpha_0\log\frac{h(r_2)}{h(r_1)}.$$ Taking exponentials of both
sides implies that
$$h(r_2)\leq
\exp(C(\alpha+\frac{M}{\alpha+1}r_3^2))h^{\frac{{\alpha_0}}{\alpha_0+\beta_0}}(r_1)
h^{\frac{\beta_0}{\alpha_0+\beta_0}}(r_3).
$$
Note that $ 0<r_3<1$. As we know, the minimum value of the
exponential function in the last inequality will be achieved if we
take $\alpha=\sqrt{M}$. Hence
$$h(r_2)\leq
\exp{(C\sqrt{M})}h^{\frac{\alpha_0}{\alpha_0+\beta_0}}(r_1)h^{\frac{\beta_0}{\alpha_0+\beta_0}}(r_3),$$
where $C$ is a constant depending only on $n$. We are done with the
$L^2$-version of three-ball theorem.
\end{proof}

From the above lemma, one can see that the appearance of $\alpha$
reduces the exponent of exponential in the $L^2$-version of
three-ball theorem. Thanks to Lemma \ref{niu}, we are able to
establish a $L^\infty$-version of three-ball theorem, which will be
used in the propagation of smallness argument.
\begin{lemma}
Let $0<r_1<r_2<2r_2<r_3<1$. Then
\begin{equation} \|u\|_{L^\infty(\mathbb B_{r_2})}\leq
C\exp{(C\sqrt{M})}(\frac{r_3^2}{r_3-2r_2})^{\frac{n}{2}}
\|u\|_{L^\infty(\mathbb
B_{r_1})}^{\frac{\alpha_1}{\alpha_1+\beta_1}}
\|u\|_{L^\infty(\mathbb
B_{r_3})}^{\frac{\beta_1}{\alpha_1+\beta_1}}, \label{infy}
\end{equation}
where
$$\alpha_1=\log\frac{r_3}{\frac{2}{3}(r_2+r_3)}$$ and
$$\beta_1=\log\frac{\frac{2}{3}(r_2+r_3)}{r_1}.$$
\label{kao}
\end{lemma}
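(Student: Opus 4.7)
The plan is to upgrade the $L^2$-version of the three-ball theorem in Lemma \ref{niu} to an $L^\infty$-version by combining it with a standard interior elliptic regularity estimate for $\Delta u = Vu$. Specifically, the key input is an $L^\infty$--$L^2$ estimate of the form
\begin{equation*}
\|u\|_{L^\infty(\mathbb B_\rho)} \leq \frac{C\exp(C\sqrt{M})}{(\rho'-\rho)^{n/2}}\, \|u\|_{L^2(\mathbb B_{\rho'})}
\end{equation*}
whenever $0<\rho<\rho'<10$, which follows from Moser iteration applied to $\Delta u = Vu$ with $\|V\|_{L^\infty}\leq M$ (the $M$-dependence can always be made no worse than $\exp(C\sqrt{M})$ by a standard rescaling argument). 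Together with the trivial one-sided bound $h(r) \leq Cr^n\|u\|_{L^\infty(\mathbb B_r)}^2$, this will let me convert every $L^2$ quantity in Lemma \ref{niu} to an $L^\infty$ one.

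The central bookkeeping step is the choice of the intermediate radius. I would pick
\begin{equation*}
r_2' := \frac{r_2+r_3}{3},
\end{equation*}
so that $2r_2' = \tfrac{2}{3}(r_2+r_3)$. The hypothesis $2r_2<r_3$ then guarantees both $r_2<r_2'$ and $2r_2'<r_3$, so the triple $(r_1, r_2', r_3)$ is admissible for Lemma \ref{niu}, and the thickness of the annulus is $r_2'-r_2 = (r_3-2r_2)/3$. This is exactly the choice that makes the exponents $\alpha_0 = \log(r_3/(2r_2'))$ and $\beta_0 = \log(2r_2'/r_1)$ from Lemma \ref{niu} coincide with the $\alpha_1$ and $\beta_1$ stated in Lemma \ref{kao}.

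The proof then assembles in three moves: first apply the interior estimate on the pair $(\mathbb B_{r_2}, \mathbb B_{r_2'})$ to obtain
\begin{equation*}
\|u\|_{L^\infty(\mathbb B_{r_2})} \leq \frac{C\exp(C\sqrt M)}{(r_3-2r_2)^{n/2}}\, h(r_2')^{1/2};
\end{equation*}
second, apply Lemma \ref{niu} to estimate $h(r_2')$ by $\exp(C\sqrt{M})\,h(r_1)^{\alpha_1/(\alpha_1+\beta_1)}h(r_3)^{\beta_1/(\alpha_1+\beta_1)}$; third, insert the trivial bounds $h(r_i) \leq Cr_i^n \|u\|_{L^\infty(\mathbb B_{r_i})}^2$ for $i=1,3$, using $r_1,r_3<1$ to discard the (bounded) polynomial factors $r_1^{n\alpha_1/(\alpha_1+\beta_1)}$ and $r_3^{n\beta_1/(\alpha_1+\beta_1)}$. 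Merging the two $\exp(C\sqrt M)$ factors and using that the exponents $\alpha_1/(\alpha_1+\beta_1)$ and $\beta_1/(\alpha_1+\beta_1)$ sum to one yields precisely the claimed inequality (the factor $r_3^2$ inside the geometric prefactor may be absorbed for free since $r_3\leq 1$).

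The only real obstacle is making sure the $M$-dependence in the elliptic estimate is compatible with the $\exp(C\sqrt M)$ already furnished by Lemma \ref{niu}. Moser iteration on $|\Delta u|\leq M|u|$ produces a constant of the form $C(n)\exp(C\sqrt{M}\,\rho')$ on balls of radius $\rho'\leq 10$, which is harmless. Everything else is a routine matter of packaging exponents, and the rest of the argument is algebraic.
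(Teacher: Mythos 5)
Your proposal is correct and follows essentially the same route as the paper: an interior $L^\infty$--$L^2$ elliptic estimate (the paper cites ``standard elliptic theory'' with constant $C(\|V\|_{L^\infty}+1)^{n/2}$, which is likewise absorbed into $\exp(C\sqrt M)$), the same intermediate radius $\frac{r_2+r_3}{3}$ so that the exponents of Lemma \ref{niu} become $\alpha_1,\beta_1$, and the trivial bounds $h(r)\le Cr^n\|u\|^2_{L^\infty(\mathbb B_r)}$. One small caveat: your claim that the factor $r_3^2$ in the prefactor ``may be absorbed for free since $r_3\le 1$'' goes the wrong way (dropping it yields a bound with the larger prefactor $(r_3-2r_2)^{-n/2}$, which is weaker than the stated $(\tfrac{r_3^2}{r_3-2r_2})^{n/2}$), but the paper's own derivation is equally loose about this geometric prefactor, and it is immaterial in the application since the radii there are fixed constants.
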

\begin{proof}
Using the standard elliptic theory for the solution in
$(\ref{kenig})$, we have
\begin{equation}
\|u\|_{L^\infty(\mathbb B_{\delta})}\leq
C(\|V\|_{L^\infty}+1)^{\frac{n}{2}}\delta^{-\frac{n}{2}}\|u\|_{L^2(\mathbb
B_{2\delta})}, \label{ttt}
\end{equation}
here $C$ does not depends on $\delta$. By some rescaling argument,
$$\|u\|_{L^\infty(\mathbb B_{r})}\leq
C(\|V\|_{L^\infty}+1)^{\frac{n}{2}}(\rho-r)^{-\frac{n}{2}}\|u\|_{L^2(\mathbb
B_{\rho})}$$ for $0<r<\rho<1$. Then
$$\|u\|_{L^\infty(\mathbb B_{r_2})}\leq
C(\|V\|_{L^\infty}+1)^{\frac{n}{2}}(r_3-2r_2)^{-\frac{n}{2}}\|u\|_{L^2(\mathbb
B_{\frac{r_2+r_3}{3}})}.$$ Taking advantage of Lemma \ref{niu}, we
deduce that
$$\|u\|_{L^\infty(\mathbb B_{r_2})}\leq C
(\|V\|_{L^\infty}+1)^{\frac{n}{2}}(r_3-2r_2)^{-\frac{n}{2}}r_3^n\exp{(C\sqrt{M})}
\|u\|_{L^\infty(\mathbb B(r_1))}^{\frac{\alpha_1}{\alpha_1+\beta_1}}
\|u\|_{L^\infty(\mathbb
B(r_3))}^{\frac{\beta_1}{\alpha_1+\beta_1}}.$$ Recall that
$\|V\|_{W^{1, \infty}}\leq M$. Thus, we arrive at the conclusion.
\end{proof}

Now we are ready to prove Theorem \ref{th1}. We apply the idea of
propagation of smallness  which is based on overlapping of
three-ball argument. Similar arguments have been employed in
\cite{DF}.
\begin{proof} [Proof of Theorem  \ref{th1}]
We choose a small $r$ such that $$\sup_{\mathbb
B_{\frac{r}{2}(0)}}|u|=\epsilon.$$ Obviously, $\epsilon>0$. Since
$\sup_{|x|\leq 1}|u(x)|\geq 1$, there exists some $\bar x\in \mathbb
B_1$ such that $u(\bar x)=\sup_{|x|\leq 1}|u(x)|\geq 1$. We select a
sequence of balls with radius $r$ centered at $x_0=0, \ x_1, \cdots,
x_d$ so that $x_{i+1}\in \mathbb B_{\frac{r}{2}}(x_i)$ and $\bar
x\in \mathbb B_{r}(x_d)$, where $d$ depends on the radius $r$ which
we will fix later on.  Employing Lemma \ref{kao} with
$r_1=\frac{r}{2}$, $r_2=r$, and $r_3=3r$ and the boundedness
assumption of $u$, we get
$$ \|u\|_{L^\infty{(\mathbb B_r)}}\leq C_1\epsilon^{\theta}\exp{(C\sqrt{M})} $$
where $1<\theta=\frac{\log\frac{9}{8}}{\log 6}<1$ and $C_1$ depends
on the $L^\infty$-norm of $u$.

Iterating the above argument with Lemma \ref{kao} for balls centered
at $x_i$ and using the fact that $\|u\|_{L^\infty(\mathbb
B_\frac{r}{2}(x_{i+1}))}\leq \|u\|_{L^\infty(\mathbb B_r(x_{i}))}$,
we have
$$\|u\|_{L^\infty(\mathbb B_{r}(x_{i}))}\leq C_i
\epsilon^{D_i}\exp{(E_i\sqrt{M})}$$ for $i=0, 1, \cdots, d$, where
$C_i$ is a constant depending on $d$ and $L^\infty$-norm of $u$, and
$D_i$, $E_i$ are constants depending on $d$. By the fact that $
u(\bar x)\geq 1$ and $\bar x \in \mathbb B_{r}(x_d)$, we obtain
$$\sup_{\mathbb B_{\frac{r}{2}(0)}}|u|=\epsilon\geq
K_1\exp{(-K_2{\sqrt{M}})},$$ where $K_1$ is a constant  depending on
$d$ and $L^\infty$-norm of $u$, and $K_2$ is a constant depending on
$d$.

Applying the $L^\infty$ type of three-ball lemma again centered at
origin again with $r_2=\frac{r}{2}>r_1$ and $r_3=3r$, where $r_1$ is
sufficiently small, we have
$$ K_1\exp{(-K_2{\sqrt{M}})}\leq C\exp{(C\sqrt{M})}
\|u\|_{L^\infty(\mathbb
B_{r_1})}^{\frac{\alpha_1}{\alpha_1+\beta_1}}
C_0^{\frac{\beta_1}{\alpha_1+\beta_1}}.
$$
Recall that $C_0$ is the $L^\infty$ norm of $u$ in $ \mathbb
B_{10}$. Then
\begin{equation}
K_3^{1+q}\exp{(-(1+q)K_4{\sqrt{M}})}\leq \|u\|_{L^\infty(\mathbb
B_{r_1})}, \label{fina2}
\end{equation}
 where $K_3$ depends on $d$ and $L^\infty$ norm of $u$, $K_4$ depends on
 $d$, and
$$ q=\frac{\beta_1}{\alpha_1}=\frac{\log\frac{7}{3}r- \log r_1}{\log\frac{9}{7}}= -K_5+\log\frac{9}{7}\log\frac{1}{r_1}$$
with constant $K_5>0$ depending on $r$. Now we can fix the small
$r$. For instance, let $r=\frac{1}{100}$. Thus, the number $d$ is
also determined. The inequality (\ref{fina2}) implies that
$$\|u\|_{L^\infty(\mathbb B_{r_1})}\geq K_6r_1^{K_7\sqrt{M}}, $$
where the constants $K_6, K_7$ depend on the dimension $n$ and
$C_0$. Therefore, Theorem 1 is completed.
\end{proof}

\section{Higher order elliptic equations}
In this section, we consider the vanishing order of solutions for
the higher order elliptic equations. As far as we know, the explicit
vanishing order seems to be unknown in the literature. Due to the
complexity of its structure, we decompose the model in (\ref{nor})
into a system of $m$ semilinear equations, that is,
\begin{equation}
\left \{\begin{array}{lll} -\lap u_1&=&u_2, \\
-\lap u_i&=&u_{i+1}, \quad i=2,\cdots, m-1,\\
-\lap u_m&=& \overline Vu_1.
\end{array}
\right. \label{syst}
\end{equation}
Note that $u_1=u$. Inspired by our frequency function in section 2,
it is nature to consider the following function for the system of
semilinear equations in (\ref{syst}). Let \begin{equation}
H_{x_0}(r)=\sum^m_{i=1}\int_{\mathbb
B_r(x_0)}u_i^2(r^2-|x-x_0|^2)^{\alpha}\,dx.
\label{hhl}\end{equation} As before, we may assume $x_0=0$ and omit
the integration on $\mathbb B_r$ if it is clear from the context.
Namely,
$$H(r)=\sum^m_{i=1}\int u_i^2(r^2-|x|^2)^{\alpha} \,dx. $$ The value of the constant $\alpha>0$ will
be determined later on. If one takes derivative for $H(r)$ with
respect to $r$, following the similar calculations in section 2, one
has
\begin{eqnarray}
H'(r)&=&2\alpha r\sum^m_{i=1}\int u_i^2(r^2-|x|^2)^{\alpha-1}\,dx
\nonumber\\
&=&\frac{2\alpha}{r}\sum^m_{i=1}\int
u_i^2(r^2-|x|^2)^\alpha\,dx+\frac{2\alpha}{r}\sum^m_{i=1}\int
u_i^2(r^2-|x|^2)^{\alpha-1}|x|^2\,dx \nonumber \\
&=&\frac{2\alpha}{r}H(r)-\frac{1}{r}\sum^m_{i=1}\int u_i^2
x\cdot\nabla(r^2-|x|^2)^{\alpha}\,dx.
 \label{rege}
\end{eqnarray}
Performing the divergence theorem for the second term in the right
hand side of the last equality, we obtain that
\begin{equation}
H'(r)=\frac{2\alpha+n}{r}H(r)+\frac{1}{(\alpha+1)r}I(r),
\label{come}
\end{equation}
where
\begin{equation}I(r)=2(\alpha+1)\sum^m_{i=1}\int (x\cdot\nabla
u_i)u_i(r^2-|x|^2)^{\alpha} \,dx. \label{inb}\end{equation} Applying
the divergence theorem on $I(r)$, we have
\begin{eqnarray}
I(r)&=&-\sum^m_{i=1}\int \nabla u_i
\cdot\nabla(r^2-|x|^2)^{\alpha+1} u_i\,dx \nonumber \\
&=&\sum^m_{i=1}\int |\nabla u_i|^2
(r^2-|x|^2)^{\alpha+1}\,dx+\sum^m_{i=1}\int \lap u_i
u_i(r^2-|x|^2)^{\alpha+1} \,dx.\label{lasth}
\end{eqnarray}
Considering the systems of equations (\ref{syst}), it follows that
\begin{eqnarray}
I(r)&=&\sum^m_{i=1}\int |\nabla
u_i|^2(r^2-|x|^2)^{\alpha+1}\,dx-\sum^{m-1}_{i=1}\int
u_{i+1}u_i(r^2-|x|^2)^{\alpha+1}\,dx \label{sui} \\&&-\int
\overline{V}u_mu_1(r^2-|x|^2)^{\alpha+1}\,dx. \nonumber
\end{eqnarray}

For the higher order elliptic equations, we define our variant of
frequency function as
\begin{equation}
N(r)=\frac{I(r)}{H(r)}.
\end{equation}
Since we are dealing with more complex structure, more careful
calculations are devoted. Different from the semilinear equation
case,  higher regularity, i.e. $\overline{V}(x)\in W^{1,\, \infty}$
seems not be helpful. We consider the case that $\overline{V}(x)\in
L^\infty$. We are able to obtain the following the monotonicity
property for the frequency function $N(r)$.

\begin{lemma}
There exists a constant $C$ depending only on $n, m$ such that
$$\exp{(Cr)}\big(N(r)+\alpha(\|\overline{V}\|_{L^\infty}+1)+(\|\overline{V}\|_{L^\infty}+1)^2\big) $$
is nondecreasing function of $r\in (0, 1)$. \label{mono2}
\end{lemma}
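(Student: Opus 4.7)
The plan is to mimic the argument of Lemma \ref{mono}, applied simultaneously to each component $u_i$, and then treat the coupling terms introduced by \eqref{syst} as controllable error terms. First, differentiate $I(r)$ as given in \eqref{sui} in the form arising from \eqref{lasth}: differentiating in $r$ produces $\sum_i 2(\alpha+1) r \int |\nabla u_i|^2 (r^2-|x|^2)^{\alpha} dx$ together with boundary-free contributions from the coupling terms $u_{i+1} u_i$ and $\overline V u_m u_1$. As in Section 2, I would then rewrite $r \int |\nabla u_i|^2 (r^2-|x|^2)^{\alpha}$ by splitting $r^2 = (r^2-|x|^2) + |x|^2$, applying the divergence theorem to the $|x|^2$ piece, and then integrating by parts once more in $\partial_j u_i \,\partial_{jl} u_i \, x_l$ to produce $\Delta u_i (\nabla u_i \cdot x)$. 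The system \eqref{syst} turns these Laplacians into either $-u_{i+1}(\nabla u_i\cdot x)$ (for $i<m$) or $-\overline V u_1(\nabla u_m \cdot x)$, giving an identity of the schematic form
\begin{equation*}
I'(r) = \tfrac{2\alpha+n}{r} I(r) + \tfrac{4(\alpha+1)}{r}\sum_{i=1}^m \int (\nabla u_i \cdot x)^2 (r^2-|x|^2)^{\alpha}\,dx + R(r),
\end{equation*}
where $R(r)$ is a remainder collecting all products between different $u_i$'s and between $u_i$ and $\overline V$.

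The main technical step is bounding $R(r)$ in terms of $H(r)$, $N(r)$, $\alpha$ and $\|\overline V\|_{L^\infty}$. Typical pieces to handle are $\int u_{i+1} u_i (r^2-|x|^2)^{\alpha+1}$ and $\int u_{i+1}(\nabla u_i \cdot x) (r^2-|x|^2)^{\alpha+1}$, together with their $\overline V$-counterparts; I would estimate them by Cauchy-Schwarz (splitting $2ab \le a^2 + b^2$ as needed) so that one factor is dominated by $H(r)$ and the other by $\sum_i \int |\nabla u_i|^2 (r^2-|x|^2)^{\alpha+1}\,dx$. The key algebraic observation, visible by comparing \eqref{lasth} and \eqref{sui}, is that this last quantity equals $I(r) + \sum_{i=1}^{m-1} \int u_{i+1}u_i(r^2-|x|^2)^{\alpha+1}\,dx + \int \overline V u_m u_1 (r^2-|x|^2)^{\alpha+1}\,dx$, so up to terms of the form $(\|\overline V\|_{L^\infty}+1) H(r)$ it is controlled by $|I(r)|$, hence by $|N(r)| H(r)$. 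Once this is set up, $R(r)$ will be bounded by something of the form $C(\|\overline V\|_{L^\infty}+1) |I(r)| + C\alpha(\|\overline V\|_{L^\infty}+1) H(r) + C(\|\overline V\|_{L^\infty}+1)^2 H(r)$, where the $\alpha$-factor tracks the $(\alpha+1)$ weights that multiply the coupling terms after the differentiation, and the quadratic $(\|\overline V\|_{L^\infty}+1)^2$ comes from using Cauchy-Schwarz twice on the $\overline V u_m u_1$ chain.

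Combining with \eqref{come}, one writes
\begin{equation*}
N'(r) = \frac{I'(r)}{H(r)} - \frac{I(r)}{H(r)}\cdot \frac{H'(r)}{H(r)}
\end{equation*}
and the $\tfrac{2\alpha+n}{r} I(r)$ terms cancel exactly as in Lemma \ref{mono}. The Cauchy-Schwarz inequality
\begin{equation*}
\Bigl(\sum_{i=1}^m \int (x\cdot \nabla u_i) u_i (r^2-|x|^2)^{\alpha} dx\Bigr)^2 \le \Bigl(\sum_{i=1}^m \int (\nabla u_i\cdot x)^2 (r^2-|x|^2)^{\alpha} dx\Bigr) H(r),
\end{equation*}
which follows from the scalar Cauchy-Schwarz in each $i$ and then summing, eliminates the gradient-square term and its competing square from $H'(r)I(r)/H(r)^2$. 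What remains is an inequality of the form
\begin{equation*}
N'(r) \ge -C(\|\overline V\|_{L^\infty}+1) |N(r)| - C\alpha(\|\overline V\|_{L^\infty}+1) - C(\|\overline V\|_{L^\infty}+1)^2.
\end{equation*}

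The final step is a Gr\"onwall-type packaging: after verifying that $N(r) \ge -C(\|\overline V\|_{L^\infty}+1)$ always (so that $|N(r)|$ can be replaced by $N(r)$ plus a harmless shift absorbed into the $\alpha(\|\overline V\|_{L^\infty}+1)$ term), one obtains
\begin{equation*}
N'(r) + C\bigl(N(r) + \alpha(\|\overline V\|_{L^\infty}+1) + (\|\overline V\|_{L^\infty}+1)^2\bigr) \ge 0,
\end{equation*}
and multiplying by the integrating factor $e^{Cr}$ yields the stated monotonicity. I expect the main obstacle to be the careful bookkeeping required to track the constants in $R(r)$, in particular verifying that the quadratic $(\|\overline V\|_{L^\infty}+1)^2$ shift (rather than a higher power) suffices to absorb every cross term arising from the $\overline V u_m u_1$ coupling and its gradient versions; this is where the chain of semilinear equations behaves differently from the single second-order Schr\"odinger equation in Section 2.
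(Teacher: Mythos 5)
Your skeleton is the same as the paper's: differentiate $I(r)$ from (\ref{lasth}), integrate by parts twice, use the system (\ref{syst}) to replace the Laplacians, re-express $\sum_i\int|\nabla u_i|^2(r^2-|x|^2)^{\alpha+1}$ through (\ref{sui}) as $I(r)$ plus coupling terms, and finish with the Cauchy--Schwarz comparison between $\frac{4(\alpha+1)}{r}\sum_i\int(\nabla u_i\cdot x)^2(r^2-|x|^2)^{\alpha}\,dx\cdot H(r)$ and $\frac{1}{(\alpha+1)r}I^2(r)$. That is exactly how the paper proceeds. The problem is in your bookkeeping of the remainder $R(r)$, and it is not cosmetic: you bound $R(r)$ by $-C(\|\overline V\|_{L^\infty}+1)|I(r)|-C\alpha(\|\overline V\|_{L^\infty}+1)H(r)-C(\|\overline V\|_{L^\infty}+1)^2H(r)$, which leads to $N'(r)\geq -Cv|N(r)|-C\alpha v-Cv^2$ with $v=\|\overline V\|_{L^\infty}+1$. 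With a factor $v$ multiplying $|N(r)|$, the only integrating factor that works is $e^{Cvr}$, so you would prove monotonicity of $e^{Cvr}(N+\alpha v+v^2)$, i.e.\ a constant in the exponential depending on $\overline V$ --- precisely what the lemma is designed to avoid (and what would destroy the $CM$ vanishing order downstream, turning it into $e^{CM}$). Your proposed repair, ``since $N(r)\geq -Cv$, replace $|N|$ by $N$ plus a harmless shift,'' does not fix this: the lower bound on $N$ helps only when $N$ is negative, whereas the obstruction occurs when $N$ is large and positive, where $v|N|=vN$ cannot be dominated by $C N$ with $C=C(n,m)$.

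The fix is the one the paper uses in (\ref{fun1})--(\ref{fun2}): when you apply Cauchy--Schwarz to the gradient--coupling terms, put the entire weight on the zeroth-order factor, e.g.
\begin{equation*}
\frac{2}{r}\Bigl|\int (\nabla u_m\cdot x)\,\overline V u_1\,(r^2-|x|^2)^{\alpha+1}\,dx\Bigr|
\leq 2\int|\nabla u_m|^2(r^2-|x|^2)^{\alpha+1}\,dx+\|\overline V\|_{L^\infty}^2\int u_1^2(r^2-|x|^2)^{\alpha+1}\,dx,
\end{equation*}
so that the coefficient in front of the gradient sum --- and hence, after invoking (\ref{sui}), in front of $I(r)$ --- is a purely dimensional constant, while every power of $\overline V$ lands on $H(r)$ as $\alpha v+v^2$ (the remaining coupling terms $\int u_{i+1}u_i(r^2-|x|^2)^{\alpha+1}$ are bounded by $r^2 v H(r)$ directly). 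This yields $I'(r)\geq\frac{2\alpha+n}{r}I(r)-CI(r)-C(\alpha v+v^2)H(r)+\frac{4(\alpha+1)}{r}\sum_i\int(\nabla u_i\cdot x)^2(r^2-|x|^2)^{\alpha}\,dx$ with $C=C(n,m)$, hence $N'+CN+C(\alpha v+v^2)\geq 0$, and the stated monotonicity of $e^{Cr}(N+\alpha v+v^2)$ follows with no Gr\"onwall absorption of a $v|N|$ term needed. With that adjustment your argument coincides with the paper's proof.
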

\begin{proof}
To obtain the monotonicity result, we shall consider the derivative
of $I(r)$. Now differentiating $I(r)$ in (\ref{lasth}) with respect
to $r$,
\begin{eqnarray*}
I'(r)&=&\frac{2(\alpha+1)}{r}\sum^{m}_{i=1}\int |\nabla
u_i|^2(r^2-|x|^2)^{\alpha+1}\,dx-\frac{1}{r}\sum^{m}_{i=1}\int
|\nabla u_i|^2 \nabla(r^2-|x|^2)^{\alpha+1}\cdot x\,dx \\
&&+2(\alpha+1)r\sum^m_{i=1}\int \lap u_i u_i(r^2-|x|^2)^{\alpha}
\,dx.
\end{eqnarray*}
Integrating by parts for the second term in the right hand side of
the latter equality,
\begin{eqnarray*}
I'(r)&=&\frac{2(\alpha+1)+n}{r}\sum^{m}_{i=1}\int |\nabla
u_i|^2(r^2-|x|^2)^{\alpha+1}\,dx \\&&
+\frac{2}{r}\sum^{m}_{i=1}\sum^{n}_{l=1}\int
\partial_{jl}u_i \partial_j u_i x_l(r^2-|x|^2)^{\alpha+1}\,dx \\
&&+2(\alpha+1)r\sum^m_{i=1}\int \lap u_i u_i(r^2-|x|^2)^{\alpha}
\,dx.
\end{eqnarray*}
If one performs the divergence theorem with respect to $j$th
derivative on the second term in the right hand side of the last
inequality, one has
\begin{eqnarray*}
I'(r)&=&\frac{2(\alpha+1)+n}{r}\sum^{m}_{i=1}\int |\nabla
u_i|^2(r^2-|x|^2)^{\alpha+1}\,dx \\ &&-\frac{2}{r}\sum^{m}_{i=1}\int
(\nabla u_i\cdot x) \lap u_i (r^2-|x|^2)^{\alpha+1}\,dx
-\frac{2}{r}\sum^{m}_{i=1}\int |\nabla
u_i|^2(r^2-|x|^2)^{\alpha+1}\,dx
\\ &&+
\frac{4(\alpha+1)}{r}\sum^{m}_{i=1}\int (\nabla u_i\cdot
x)^2(r^2-|x|^2)^\alpha \,dx+2(\alpha+1)r\sum^m_{i=1}\int \lap u_i
u_i(r^2-|x|^2)^{\alpha} \,dx.
\end{eqnarray*}
Using the equivalent system of equations in (\ref{syst}), it follows
that
\begin{eqnarray*}
I'(r)&=&\frac{2\alpha+n}{r}\sum^{m}_{i=1}\int |\nabla
u_i|^2(r^2-|x|^2)^{\alpha+1}\,dx+\frac{2}{r}\sum^{m-1}_{i=1}\int
(\nabla u_i\cdot x) u_{i+1} (r^2-|x|^2)^{\alpha+1}\,dx
\\&&+\frac{2}{r}\int (\nabla u_m \cdot x)\overline{V}u_1
(r^2-|x|^2)^{\alpha+1}\,dx+\frac{4(\alpha+1)}{r}\sum^{m}_{i=1}\int
(\nabla u_i \cdot x)^2 (r^2-|x|^2)^\alpha\,dx\\
&&-2(\alpha+1)r\sum^{m-1}_{i=1}\int  u_{i+1} u_i
(r^2-|x|^2)^\alpha\,dx-2(\alpha+1)r\int
\overline{V}u_1u_m(r^2-|x|^2)^\alpha\,dx.
\end{eqnarray*}
We want to transform the first term in the right hand side of the
latter inequality in term of $I(r)$. Taking (\ref{sui}) into
consideration and performing some calculations, we have
\begin{eqnarray*}
I'(r)&=&\frac{2\alpha+n}{r}I(r)+(n-2)r \sum^{m-1}_{i=1}\int
u_{i+1}u_i(r^2-|x|^2)^\alpha \,dx \\ &&+(n-2)r\int \overline{V}u_m
u_1(r^2-|x|^2)^\alpha\,dx -\frac{2\alpha+n}{r}\sum^{m-1}_{i=1}\int
u_{i+1}u_i(r^2-|x|^2)^\alpha|x|^2\,dx \\ &&-\frac{2\alpha+n}{r}\int
\overline{V} u_m u_1(r^2-|x|^2)^\alpha|x|^2\,dx
+\frac{4(\alpha+1)}{r}\sum^{m}_{i=1}\int (\nabla u_i\cdot x)^2
(r^2-|x|^2)^\alpha\,dx \\&&+\frac{2}{r}\sum^{m-1}_{i=1}\int (\nabla
u_i\cdot  x) u_{i+1} (r^2-|x|^2)^{\alpha+1}\,dx +\frac{2}{r}\int
(\nabla u_m\cdot x)\overline{V}u_1 (r^2-|x|^2)^{\alpha+1}\,dx.
\end{eqnarray*}
Now we estimate each term in the right hand side of the last
equality. Using H\"{o}lder's inequality and the definition of $H(r)$
in (\ref{hhl}), we obtain
\begin{eqnarray}
(n-2)r \sum^{m-1}_{i=1}\int u_{i+1}u_i
(r^2-|x|^2)^\alpha\,dx&+&(n-2)r\int \overline{V}u_m
u_1(r^2-|x|^2)^\alpha\,dx \nonumber \\ &\geq&- C
r(\|\overline{V}\|_{L^\infty}+1) H(r) \label{equ1}
\end{eqnarray}
and
\begin{eqnarray}
-\frac{2\alpha+n}{r}\sum^{m-1}_{i=1}\int
u_{i+1}u_i(r^2-|x|^2)^{\alpha}|x|^2\,dx &-&\frac{2\alpha+n}{r}\int
\overline{V} u_m u_1(r^2-|x|^2)^\alpha|x|^2\,dx \nonumber\\ &&\geq-
(2\alpha+n) r(\|\overline{V}\|_{L^\infty}+1) H(r) \label{equ2}.
\end{eqnarray}
Similarly, by H\"{o}lder's inequality,
\begin{eqnarray}
\frac{2}{r}\sum^{m-1}_{i=1}\int (\nabla u_i \cdot x) u_{i+1}
(r^2-|x|^2)^{\alpha+1}\,dx&\geq& -2\sum^{m-1}_{i=1} \int |\nabla
u_i|^2(r^2-|x|^2)^{\alpha+1}\,dx \nonumber \\&&
-2\sum^{m-1}_{i=1}\int u^2_{i+1}(r^2-|x|^2)^{\alpha+1}\,dx
\label{fun1}
\end{eqnarray}and
\begin{eqnarray}
\frac{2}{r}\int (\nabla u_m \cdot x)\overline{V} u_1
(r^2-|x|^2)^{\alpha+1}\,dx &\geq &-2\int |\nabla
u_m|^2(r^2-|x|^2)^{\alpha+1}\,dx \nonumber
\\&&-\|\overline{V}\|_{L^\infty}^2\int
u_1^2(r^2-|x|^2)^{\alpha+1}\,dx. \label{fun2}
\end{eqnarray}
For the ease of the notation, let
$$v=\|\overline{V}\|_{L^\infty}+1.$$
Combining the inequalities (\ref{fun1}) and (\ref{fun2}) and taking
(\ref{sui}) into account, we get
\begin{eqnarray}
\frac{2}{r}\int \big(\sum^{m-1}_{i=1}(\nabla u_i\cdot  x) u_{i+1}&+&
  (x\cdot\nabla u_m) \overline{V}u_1\big)(r^2-|x|^2)^{\alpha+1}\,dx \nonumber
  \\ &\geq &  -C\sum^{m}_{i=1}
\int |\nabla u_i|^2(r^2-|x|^2)^{\alpha+1}\,dx \nonumber \\&&- Cv^2
\sum^{m}_{i=1}\int |u_i|^2(r^2-|x|^2)^{\alpha+1}\,dx
 \nonumber\\
&\geq &-CI(r)-Cv^2rH(r) \nonumber \\&& -C\int
(\sum^{m-1}_{i=1}u_{i+1}u_i
+\overline{V}u_mu_1)(r^2-|x|^2)^{\alpha+1}\,dx
 \nonumber \\ &\geq &
-CI(r)-Cv^2rH(r).
 \label{shi}
\end{eqnarray}
Therefore, together with (\ref{equ1}), (\ref{equ2}), and
(\ref{shi}),
\begin{eqnarray*}
I'(r)&\geq& \frac{2\alpha+n}{r}I(r)-CI(r)-C(\alpha v+v^2)H(r)
\nonumber \\&&+\frac{4(\alpha+1)}{r}\sum^{m}_{i=1}\int (\nabla
u_i\cdot x)^2(r^2-|x|^2)^{\alpha} \,dx,
\end{eqnarray*}
where $C$ depends only on $n$ and $m$. In order to get monotonicity
of the frequency function, we differentiate $N(r)$. Recall $H(r)$ in
(\ref{hhl}) and $H'(r)$ in (\ref{come}).
\begin{eqnarray*}
N'(r)&=&\frac{I'(r)H(r)-H'(r)I(r)}{H^2(r)} \\
&\geq&
\frac{1}{H^2(r)}\big\{\frac{2\alpha+n}{r}I(r)H(r)-CI(r)H(r)-C(\alpha
v+v^2)H^2(r)
\\&&+\frac{4(\alpha+1)}{r}\sum^{m}_{i=1}\int
(\nabla u_i\cdot x)^2(r^2-|x|^2)^{\alpha}\,dx\sum^{m}_{i=1}\int
|u_i|^2(r^2-|x|^2)^\alpha\,dx \\&&-\frac{2\alpha+n}{r}I(r)H(r)
-\frac{1}{(\alpha+1)r}I^2(r)\big\} \\
&\geq&\frac{1}{H^2(r)}\big\{ \frac{4(\alpha+1)}{r}\sum^{m}_{i=1}\int
(\nabla u_i\cdot x)^2(r^2-|x|^2)^\alpha\,dx\sum^{m}_{i=1}\int
|u_i|^2(r^2-|x|^2)^{\alpha}\,dx
\\&&-\frac{4(\alpha+1)}{r}\sum^{m}_{i=1}\int (\nabla u_i\cdot x)
u_i(r^2-|x|^2)^\alpha\,dx -CI(r)H(r)-C(\alpha v+v^2)H^2(r)\big\},
\end{eqnarray*}
where we have used $I(r)$ in (\ref{inb}) in the last inequality. By
Cauchy-Schwartz inequality,
$$
N'(r)+CN(r)+C(\alpha v+v^2)\geq 0.$$ Consequently,
$$ \exp{(Cr)}(N(r)+\alpha v+v^2) \ \ \mbox{is
nondecreasing.}$$ We complete the proof of the lemma.
\end{proof}
As the conclusion in Lemma \ref{mono2} indicates, the monotonicity
property only relies on the polynomial growth of $\overline{V}$ and
$r$ does not depend on $\overline V$. We are going to establish a
$L^2$-version of three-ball theorem. For convenience, let
$$
\overline{N}(r)=\exp{(Cr)}(N(r)+\alpha v+v^2).$$ We also need to
remove the weight function $(r^2-|x|^2)^{\alpha}$ in $H(r)$. As in
the section 2, let
$$h(r)=\sum^m_{i=1}\int_{B_r(x_0)}u_i^2\,dx.$$
As usual, we will omit the dependent of the center of $x_0$ for the
ball. It is easy to check that
\begin{equation}
H(r)\leq r^{2\alpha}h(r) \label{tian}
\end{equation}
and
\begin{equation}
h(r)\leq \frac{H(\rho)}{(\rho^2-r^2)^\alpha} \label{tian1}
\end{equation}
for any $0<r<\rho<1$.

Based on the monotonicity of $N(r)$ in the last lemma, we are able
to establish the following $L^2$-type of three-ball theorem.
\begin{lemma}
Let $0<r_1<r_2<2r_2<r_3<1$. Then
$$ h(r_2)\leq
(\frac{r_3}{2r_2})^{CM}\exp{(CM)}h^{\frac{\alpha_2}{\alpha_2+\beta_2}}(r_1)h^{\frac{\beta_2}{\alpha_2+\beta_2}}(r_3)
$$
where
$$\alpha_2=\log\frac{r_3}{2r_2}$$
and
$$\beta_2=C\log\frac{2r_2}{r_1},$$
where $C$ depends only on $n$ and $m$. \label{hhh}
\end{lemma}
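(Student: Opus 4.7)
The plan is to follow the template of Lemma \ref{niu}, replacing the additive $\|V\|_{W^{1,\infty}}r^2$ correction by the multiplicative $e^{Cr}$ factor supplied by Lemma \ref{mono2}. First I would integrate the identity
\[
\frac{H'(r)}{H(r)} = \frac{2\alpha+n}{r} + \frac{1}{(\alpha+1)r}N(r),
\]
which comes from (\ref{come}), separately on $[2r_2, r_3]$ and on $[r_1, 2r_2]$. This produces exact expressions for $\log H(r_3)/H(2r_2)$ and $\log H(2r_2)/H(r_1)$, each containing an integral of $N(r)/r$.

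Next I would apply the monotonicity of $\overline{N}(r) = e^{Cr}(N(r) + \alpha v + v^2)$, where $v := \|\overline{V}\|_{L^\infty}+1$. Since $r \in (0,1)$, the factor $e^{Cr}$ ranges between universal constants depending only on $n,m$, so monotonicity of $\overline{N}$ translates, for $r \leq 2r_2$, into an upper bound $N(r) \leq c_2(N(2r_2)+\alpha v+v^2) - (\alpha v+v^2)$ and, for $r \geq 2r_2$, into a lower bound $N(r) \geq c_1(N(2r_2)+\alpha v+v^2) - (\alpha v+v^2)$. Plugging these into the two integrals and eliminating $N(2r_2)$ by a weighted combination with weights $\log(r_3/2r_2)$ and $c\log(2r_2/r_1)$ (the extra constant $c$ absorbing the asymmetry between $c_1$ and $c_2$, which is exactly why $\beta_2$ in the statement carries an additional $C$ while $\alpha_2$ does not) yields a Hadamard-type inequality for $H$.

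I would then convert from $H$ to $h$ via (\ref{tian}) and (\ref{tian1}), proceeding exactly as in Lemma \ref{niu}: the weight $(r^2-|x|^2)^\alpha$ contributes factors like $r^{2\alpha}$ and $(\rho^2-r^2)^\alpha$ which produce additional $\alpha\log(\cdot)$ terms, easily absorbed. After these manipulations the estimate takes the shape
\[
h(r_2) \leq \exp\!\left(\frac{C(\alpha v + v^2)}{\alpha+1}\right) \left(\frac{r_3}{2r_2}\right)^{2\alpha+n} h(r_1)^{\alpha_2/(\alpha_2+\beta_2)} h(r_3)^{\beta_2/(\alpha_2+\beta_2)}.
\]
Since $v \leq M+1$, the exponential error is of order $v + v^2/(\alpha+1)$; optimizing by choosing $\alpha \sim M$ renders this error $O(M)$ and simultaneously makes the exponent $2\alpha+n$ of the $(r_3/2r_2)$ factor equal to $O(M)$, producing the claimed $(r_3/2r_2)^{CM}\exp(CM)$.

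The main obstacle is bookkeeping the asymmetry introduced by the multiplicative $e^{Cr}$ in Lemma \ref{mono2}: unlike the Schr\"odinger case, where the correction $C\|V\|_{W^{1,\infty}}r^2$ subtracts cleanly on both sides, here the monotonicity yields inequalities with different multiplicative constants on the two intervals, and this asymmetry must be tracked carefully so that the final constants $C$ depend only on $n$ and $m$ and so that $\beta_2$ picks up precisely one extra universal multiplicative constant. Once this is handled, the final optimization of $\alpha$ is routine.
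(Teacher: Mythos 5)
Your proposal is correct and follows essentially the same route as the paper's proof: integrate the identity (\ref{long}) over $[r_1,2r_2]$ and $[2r_2,r_3]$, use the monotonicity of $\overline{N}(r)=e^{Cr}(N(r)+\alpha v+v^2)$ from Lemma \ref{mono2} with the factor $e^{Cr}$ absorbed into dimensional constants (this is exactly the paper's $e^{-C}$ in (\ref{new1}), and it is precisely what forces the extra constant in $\beta_2$, as you note), eliminate $\overline{N}(2r_2)$, pass from $H$ to $h$ via (\ref{tian})--(\ref{tian1}), and optimize $\alpha\sim v\sim M$. The only slight bookkeeping slip is in your displayed intermediate estimate: the error $\frac{C(\alpha v+v^2)}{\alpha+1}$ actually enters multiplied by $\frac{\alpha_2\beta_2}{\alpha_2+\beta_2}\le\log\frac{r_3}{2r_2}$, i.e.\ it contributes to the $\left(\frac{r_3}{2r_2}\right)^{CM}$ factor rather than to a plain $\exp(CM)$, which is harmless since the stated bound contains both factors.
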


\begin{proof}
From (\ref{come}), we deduce that
\begin{equation}\frac{H'(r)}{H(r)}=\frac{2\alpha+n}{r}+\frac{1}{(\alpha+1)r}N(r).
\label{long}
\end{equation}

 On one hand, integrating from
$r_1$ to $2r_2$ on the equality (\ref{long}) gives that
\begin{eqnarray*}
\log \frac{H(2r_2)}{H(r_1)} &=&
(2\alpha+n)\log\frac{2r_2}{r_1}+\frac{1}{\alpha+1}\int^{2r_2}_{r_1}\frac{N(r)}{r}\,dr \\
&\leq&(2\alpha+n)\log\frac{2r_2}{r_1}+\frac{\overline{N}(2r_2)}{\alpha+1}\log
\frac{2r_2}{r_1}-\frac{\alpha v+v^2}{\alpha+1}\log\frac{2r_2}{r_1},
\end{eqnarray*}
where we have used Lemma \ref{mono2} in the last inequality. Namely,
\begin{equation}
\frac{\log{H(2r_2)}/{H(r_1)}}{\log{2r_2}/{r_1}}-(2\alpha
+n)+\frac{\alpha v+v^2}{\alpha+1}\leq
\frac{\overline{N}(2r_2)}{\alpha+1}. \label{new}
\end{equation}
On the other hand, integrating from $2r_2$ to $r_3$ on the equality
(\ref{long}) implies that
\begin{eqnarray*}
\log \frac{H(r_3)}{H(2r_2)} &=&
(2\alpha+n)\log\frac{r_3}{2r_2}+\frac{1}{\alpha+1}\int^{r_3}_{2r_2}\frac{N(r)}{r}\,dr \\
&\geq&
(2\alpha+n)\log\frac{r_3}{2r_2}+\frac{1}{\alpha+1}\exp{(-C)}\overline{N}(2r_2)\log
\frac{r_3}{2r_2}-\frac{\alpha v+v^2}{\alpha+1}\log\frac{r_3}{2r_2},
\end{eqnarray*}
that is,
\begin{equation}
\frac{\log{H(r_3)}/{H(2r_2)}}{\log{r_3}/{2r_2}}-(2\alpha+n)+\frac{\alpha
v+v^2}{\alpha+1}\geq \frac{1}{\alpha+1}\exp{(-C)}\overline{N}(2r_2).
\label{new1}
\end{equation}
Taking (\ref{new}) and (\ref{new1}) into considerations, we get
\begin{equation} \frac{\log{H(2r_2)}/{H(r_1)}}{\log{2r_2}/{r_1}}\leq
C\frac{\log{H(r_3)}/{H(2r_2)}}{\log{r_3}/{2r_2}}+C\frac{\alpha
v+v^2}{\alpha+1}. \label{eric}
\end{equation}
Thanks to (\ref{tian}) and (\ref{tian1}),
\begin{eqnarray*}
\log\frac{H(r_3)}{H(2r_2)}&\leq & \log (r_3^{2\alpha}h(r_3))-\log
(3r_2)^{\alpha}-\log h(r_2) \\
&\leq & 2\alpha \log r_3+\log h(r_3)-2\alpha \log(2r_2)-\log
h(r_2)+\alpha \log\frac{4}{3}.
\end{eqnarray*}
Therefore,
\begin{equation}
\frac{\log\frac{H(r_3)}{H(2r_2)}}{\log\frac{r_3}{2r_2}}+\frac{\alpha
v+v^2}{\alpha+1} \leq
\alpha+\frac{\log\frac{h(r_3)}{h(r_2)}}{\log\frac{r_3}{2r_2}}+\frac{\alpha
\log\frac{4}{3}}{\log\frac{r_3}{2r_2}}+\frac{\alpha
v+v^2}{\alpha+1}. \label{ren1}
\end{equation}
We do the similar calculations for $\log\frac{H(2r_2)}{H(r_1)}$.
Using (\ref{tian}) and (\ref{tian1}) again,
\begin{eqnarray*}
\log\frac{H(2r_2)}{H(r_1)}&\geq& \log ((3r_2^2)^{\alpha}h(r_2))-\log
r^{2\alpha}_1-\log h(r_1) \\
&\geq & 2\alpha\log(2r_2)-\alpha\log\frac{4}{3}+\log
h(r_2)-2\alpha\log r_1-\log h(r_1).
\end{eqnarray*}
Thus,
\begin{equation}
\frac{\log\frac{H(2r_2)}{H(r_1)}}{\log\frac{2r_2}{r_1}}\geq
2\alpha-\frac{\alpha\log\frac{4}{3}}{\log\frac{2r_2}{r_1}}+\frac{\log\frac{h(r_2)}{h(r_1)}}
{\log\frac{2r_2}{r_1}}. \label{ren}
\end{equation}
Taking (\ref{eric}), (\ref{ren1}) and (\ref{ren}) into account, we
have
$$C\frac{\log\frac{h(r_3)}{h(r_2)}}{\log\frac{r_3}{2r_2}}+C\frac{\alpha
\log\frac{4}{3}}{\log\frac{r_3}{2r_2}}+C(\alpha+\frac{\alpha
v+v^2}{\alpha+1})\geq
-\frac{\alpha\log\frac{4}{3}}{\log\frac{2r_2}{r_1}}+\frac{\log\frac{h(r_2)}{h(r_1)}}
{\log\frac{2r_2}{r_1}}.
$$
Namely,
$$(\alpha_2+\beta_2)\alpha\log\frac{4}{3}+\beta_2\log\frac{h(r_3)}{h(r_2)}+
\alpha_2\beta_2(\alpha+\frac{\alpha v+v^2}{\alpha+1})\geq
\alpha_2\log\frac{h(r_2)}{h(r_1)}.$$ Taking exponentials of both
sides and performing some simplications, we obtain
$$h(r_2)\leq
\exp\big(\frac{\alpha_2\beta_2}{\alpha_2+\beta_2}(\alpha+\frac{\alpha
v+v^2}{\alpha+1})\big)\exp{(\alpha)}h^{\frac{{\alpha_2}}{\alpha_2+\beta_2}}(r_1)
h^{\frac{\beta_2}{\alpha_2+\beta_2}}(r_3).
$$
 Since
$$\frac{\alpha_2\beta_2}{\alpha_2+\beta_2}\leq
\log\frac{r_3}{2r_2},$$ we have
$$h(r_2)\leq
(\frac{r_3}{2r_2})^{(\alpha+\frac{\alpha
v+v^2}{\alpha+1})}\exp{(\alpha)}h^{\frac{{\alpha_2}}{\alpha_2+\beta_2}}(r_1)
h^{\frac{\beta_2}{\alpha_2+\beta_2}}(r_3).
$$
As we know, the minimum value of the function $\alpha+\frac{\alpha
v+v^2}{\alpha+1}$ is achieved in the case of $\alpha=v$. Recall that
$v=\|\overline{V}\|_{L^\infty}+1\leq 2M$. Therefore, the lemma is
completed.

\end{proof}

Again we need to establish a $L^\infty$-version of  three-ball
theorem. However, the classical elliptic estimates as (\ref{ttt})
does not seem to be known for higher order elliptic equations in the
literature. We will deduce a similar estimate by Sobolev inequality
and a $W^{2m,p}$ type estimate. We first present a $W^{2m,p}$ type
estimates for higher order elliptic equations (see e.g. \cite{LWZ}).
Let $u$ satisfy the following equation
\begin{equation}
(-\lap)^m u=g(x)  \quad \quad \mbox{in} \ \mathbb B \label{tts}.
\end{equation}
Then we have
\begin{lemma}
 Let
$1<p<\infty$. Suppose $u\in W^{2m, p} $ satisfies (\ref{tts}). Then
there exits a constant $C>0$ depending only on $ n, m $ such that
for any $\sigma\in (0, 1)$,
\begin{equation}
\|u\|_{W^{2m,p}( \mathbb B_{\sigma })}\leq
\frac{C(n,m)}{(1-\sigma)^{2m}}\big(\|g\|_{L^p(\mathbb
B)}+\|u\|_{L^p(\mathbb B)}\big). \label{apr}
\end{equation}
\label{afa}
\end{lemma}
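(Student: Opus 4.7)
The plan is to combine the classical interior $L^p$ Calder\'on--Zygmund theory for the polyharmonic operator with a cutoff argument, then use interpolation and an iteration on nested balls to produce the explicit scaling factor $(1-\sigma)^{-2m}$.

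The starting point is the standard global estimate: if $w \in W^{2m,p}_0(\mathbb B)$ satisfies $(-\lap)^m w = f$ in $\mathbb B$, then $\|w\|_{W^{2m,p}(\mathbb B)} \leq C(n,m,p)\|f\|_{L^p(\mathbb B)}$. This follows from the boundedness on $L^p$ of the Calder\'on--Zygmund singular integral operators associated with the fundamental solution of $(-\lap)^m$. I would apply this after localizing $u$ with a smooth cutoff $\eta \in C^\infty_c(\mathbb B)$ satisfying $\eta \equiv 1$ on $\mathbb B_\sigma$, $\operatorname{supp}\eta \subset \mathbb B_{\sigma'}$ with $\sigma' = (1+\sigma)/2$, and $|D^k \eta|\leq C_k(1-\sigma)^{-k}$ for $0\leq k\leq 2m$. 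Expanding $(-\lap)^m(\eta u)$ by Leibniz yields
\[
(-\lap)^m(\eta u) = \eta\, g + \sum_{\substack{|\beta|+|\gamma|=2m\\ |\gamma|<2m}} c_{\beta\gamma}\, D^\beta\eta\cdot D^\gamma u,
\]
so applying the global estimate to $\eta u$ gives
\[
\|u\|_{W^{2m,p}(\mathbb B_\sigma)} \leq C\|g\|_{L^p(\mathbb B)} + C\sum_{k=0}^{2m-1}(1-\sigma)^{k-2m}\|D^k u\|_{L^p(\mathbb B_{\sigma'})}.
\]

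The main obstacle is controlling the intermediate derivatives $\|D^k u\|_{L^p}$ for $0<k<2m$. For this I would invoke the Gagliardo--Nirenberg interpolation inequality in the form
\[
(1-\sigma)^{k-2m}\|D^k u\|_{L^p(\mathbb B_{\sigma'})} \leq \epsilon \|D^{2m}u\|_{L^p(\mathbb B_{\sigma'})} + C(\epsilon)(1-\sigma)^{-2m}\|u\|_{L^p(\mathbb B_{\sigma'})},
\]
valid on balls with the scaling adjusted to match the powers of $(1-\sigma)$. Substituting these interpolation estimates yields, for any $0<\sigma<\sigma'<1$,
\[
\|u\|_{W^{2m,p}(\mathbb B_\sigma)} \leq C\epsilon\,\|u\|_{W^{2m,p}(\mathbb B_{\sigma'})} + C\|g\|_{L^p(\mathbb B)} + \frac{C}{(1-\sigma)^{2m}}\|u\|_{L^p(\mathbb B)}.
\]

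The final step is to close the inequality by absorbing the term $\|u\|_{W^{2m,p}(\mathbb B_{\sigma'})}$ into the left-hand side. This is done by the standard iteration lemma (see e.g. Giaquinta--Martinara, or Lemma 6.1 in Gilbarg--Trudinger): one chooses a sequence $\sigma=\sigma_0<\sigma_1<\cdots$ converging to $1$, applies the previous inequality at each stage, and selects $\epsilon$ small enough so that the geometric series converges. The outcome is precisely
\[
\|u\|_{W^{2m,p}(\mathbb B_\sigma)} \leq \frac{C(n,m,p)}{(1-\sigma)^{2m}}\big(\|g\|_{L^p(\mathbb B)} + \|u\|_{L^p(\mathbb B)}\big),
\]
which is the claimed bound. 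The only delicate point is verifying that the interpolation constants combine correctly with the iteration so that the final exponent of $(1-\sigma)$ is exactly $2m$; this is a bookkeeping exercise once the above scheme is in place.
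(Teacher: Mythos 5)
Your proposal is correct in outline, but there is nothing in the paper to compare it with: the paper does not prove Lemma \ref{afa} at all, it simply quotes the estimate from the literature (the reference \cite{LWZ}). The route you take --- localization by a cutoff, the global Calder\'on--Zygmund estimate for $(-\lap)^m$ on $W^{2m,p}_0(\mathbb B)$ coming from the homogeneous multipliers $\xi^\alpha/|\xi|^{2m}$, $|\alpha|=2m$, interpolation to absorb the intermediate derivatives, and the nested-ball iteration lemma --- is the standard proof of such interior estimates, and what it buys is a self-contained justification of the explicit factor $(1-\sigma)^{-2m}$ that the paper's subsequent rescaled estimate (\ref{rrr}) depends on. Two points should be tightened. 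First, to invoke the iteration lemma you need the intermediate inequality for \emph{all} radii $s<t<1$ with blow-up factor $(t-s)^{-2m}$, not $(1-\sigma)^{-2m}$ as displayed; this is exactly what the cutoff argument yields if you take $\eta\equiv 1$ on $\mathbb B_s$, $\operatorname{supp}\eta\subset\mathbb B_t$, $|D^k\eta|\leq C(t-s)^{-k}$, and then choose the interpolation parameter $\epsilon$ proportional to $(t-s)^{2m-k}$ in the Gagliardo--Nirenberg step, which reproduces precisely the exponent $-2m$ on the $\|u\|_{L^p}$ term. Second, the Calder\'on--Zygmund constant depends on $p$ as well as on $n$ and $m$; the paper's statement suppresses this dependence, which is harmless for its application since the bootstrap in Lemma \ref{tir2} only uses finitely many exponents determined by $n$ and $m$, but your constant should honestly be recorded as $C(n,m,p)$.
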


Upon a rescaling argument, we have \begin{equation}
 \|u\|_{W^{2m,p}( \mathbb
B_{\sigma R })}\leq
\frac{C(n,m)}{(1-\sigma)^{2m}R^{2m}}\big(R^{2m}\|g\|_{L^p(\mathbb
B_R)}+\|u\|_{L^p(\mathbb B_R)}\big) \label{rrr}
\end{equation}
for $0<R<1$.

Applying Lemma \ref{afa}, we are able to establish  the
$L^\infty$-version of  three-ball theorem for the solutions in
(\ref{nor}).
\begin{lemma}
Let $0<r_1<r_2<4r_2<r_3<1$ and $n\geq 4m$. Then
\begin{equation} \|u\|_{L^\infty(\mathbb B_{r_2})}\leq C
\exp{(CM)}(r_3-4r_2)^{-\frac{n}{2}}(\frac{3r_3}{2(2r_2+r_3)})^{CM}
\|u\|_{L^\infty(\mathbb
B_{r_1})}^{\frac{\alpha_3}{\alpha_3+\beta_3}}
\|u\|_{L^\infty(\mathbb B_{r_3})}^{\frac{\beta_3}{\alpha_3+\beta_3}}
\label{not}
\end{equation}
where
$$\alpha_3=\log\frac{3r_3}{2(2r_2+r_3)}$$ and
$$\beta_3=C\log\frac{2r_2+r_3}{3r_1}.$$
\label{tir2}
\end{lemma}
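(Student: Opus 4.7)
The argument parallels the proof of Lemma \ref{kao}, with the single-step elliptic estimate (\ref{ttt}) — unavailable for $(-\lap)^m$ — replaced by a bootstrap built from the $W^{2m,p}$ estimate (\ref{rrr}) and Sobolev embedding.

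First I would establish an auxiliary $L^2$-to-$L^\infty$ bound: there exist $C,K$ depending only on $n,m$ such that
\[
\|u\|_{L^\infty(\mathbb B_r)}\le C(M+1)^{K}(\rho-r)^{-n/2}\|u\|_{L^2(\mathbb B_\rho)},\qquad 0<r<\rho<1.
\]
Applying (\ref{rrr}) with $g=\overline{V}u$, so that $\|g\|_{L^p}\le M\|u\|_{L^p}$, gives $\|u\|_{W^{2m,p}(\mathbb B_{r'})}\le C(M+1)(\rho'-r')^{-2m}\|u\|_{L^p(\mathbb B_{\rho'})}$ on concentric balls. Starting from $p_0=2$, I would iterate the Sobolev embedding $W^{2m,p_k}\hookrightarrow L^{p_{k+1}}$ with $p_{k+1}=np_k/(n-2mp_k)$ on a finite telescoping sequence of shrinking concentric balls filling the annulus $\mathbb B_\rho\setminus\overline{\mathbb B_r}$; since each step strictly increases $p_k$, finitely many steps (the number bounded in terms of $n,m$) bring us to $2mp_k>n$, whereupon $W^{2m,p_k}\hookrightarrow L^\infty$ closes the chain. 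Balancing the telescoping widths produces the exponent $-n/2$ via the standard $L^2$-to-$L^\infty$ scaling in dimension $n$.

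The second ingredient is the companion $L^\infty$-to-$L^2$ bound on $h$: iterating the system (\ref{syst}) gives $u_i=(-1)^{i-1}\lap^{i-1}u$, hence $h(r)\le\|u\|_{W^{2(m-1),2}(\mathbb B_r)}^2$. Applying (\ref{rrr}) with $p=2$ on concentric balls $\mathbb B_r\subset\mathbb B_R$, then $\|u\|_{L^2(\mathbb B_R)}\le CR^{n/2}\|u\|_{L^\infty(\mathbb B_R)}$, yields
\[
h(r)\le C(M+1)^2(R-r)^{-4m}R^{n}\|u\|_{L^\infty(\mathbb B_R)}^2,\qquad 0<r<R<1.
\]

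With both tools, the argument then runs in parallel to Lemma \ref{kao}. Set $\rho:=(2r_2+r_3)/3$; the hypothesis $4r_2<r_3$ ensures $r_2<\rho$, $2\rho<r_3$, and $r_1<\rho$, so Lemma \ref{hhh} applies to the triple $(r_1,\rho,r_3)$. The $L^2$-to-$L^\infty$ bound with $\rho-r_2=(r_3-r_2)/3\ge(r_3-4r_2)/3$ gives $\|u\|_{L^\infty(\mathbb B_{r_2})}\le C(M+1)^{K}(r_3-4r_2)^{-n/2}h(\rho)^{1/2}$; Lemma \ref{hhh} bounds $h(\rho)$ by $(3r_3/(2(2r_2+r_3)))^{CM}\exp(CM)\,h(r_1)^{\alpha_3/(\alpha_3+\beta_3)}h(r_3)^{\beta_3/(\alpha_3+\beta_3)}$; and the $L^\infty$-to-$L^2$ bound converts $h(r_1),h(r_3)$ into $\|u\|_{L^\infty(\mathbb B_{r_1})}^2,\|u\|_{L^\infty(\mathbb B_{r_3})}^2$ after a minor contraction of radii, the loss being absorbed into the constant $C$ of $\beta_3$. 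Replacing $(M+1)^{K}$ by $\exp(CM)$ (valid for $M\ge1$) yields (\ref{not}).

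The main obstacle lies in the $L^2$-to-$L^\infty$ step: absent a one-shot elliptic estimate like (\ref{ttt}) for $(-\lap)^m$, one has to execute a Sobolev bootstrap and carefully track both its length and the loss accumulated at each step in order to recover the clean exponent $-n/2$ on $r_3-4r_2$ together with the factor $\exp(CM)$. The hypothesis $n\ge4m$ is precisely the condition under which $W^{2m,2}\not\hookrightarrow L^\infty$ directly, so the bootstrap is genuinely needed; at the same time it terminates in finitely many steps depending only on $n$ and $m$, so the overall estimate is uniform in $M$.
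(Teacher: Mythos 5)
Your overall strategy coincides with the paper's: a Sobolev bootstrap on the $W^{2m,p}$ estimate (\ref{rrr}) to get an $L^2$-to-$L^\infty$ bound with constant $(M+1)^K\le\exp(CM)$, the $L^2$ three-ball inequality of Lemma \ref{hhh} at an intermediate radius, and interior elliptic estimates to convert $h$ back into sup norms of $u$. The first two ingredients are fine. The gap is in the conversion of $h(r_3)$ into $\|u\|^2_{L^\infty(\mathbb B_{r_3})}$. Your $L^\infty$-to-$L^2$ bound $h(r)\le C(M+1)^2(R-r)^{-4m}R^n\|u\|^2_{L^\infty(\mathbb B_R)}$ genuinely needs $R>r$, because $h$ contains $\lap^{i-1}u$ and derivatives of $u$ on $\mathbb B_{r_3}$ cannot be controlled by $\sup_{\mathbb B_{r_3}}|u|$ on the same ball. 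Hence, with your triple $(r_1,\rho,r_3)$, $\rho=(2r_2+r_3)/3$, the outer factor $h(r_3)$ forces either a sup norm on a ball strictly larger than $\mathbb B_{r_3}$ (not what (\ref{not}) asserts) or, as you propose, a contraction of the outer three-ball radius. The second option changes $\alpha_3=\log\frac{3r_3}{2(2r_2+r_3)}$ by an additive constant, and this loss cannot be ``absorbed into the constant $C$ of $\beta_3$'': $\alpha_3$ carries no adjustable constant, it tends to $0$ as $r_3\downarrow 4r_2$, and lowering the exponent on the inner ball weakens the inequality (since $\|u\|_{L^\infty(\mathbb B_{r_1})}\le\|u\|_{L^\infty(\mathbb B_{r_3})}$, writing the right-hand side as $\|u\|_{L^\infty(\mathbb B_{r_3})}\bigl(\|u\|_{L^\infty(\mathbb B_{r_1})}/\|u\|_{L^\infty(\mathbb B_{r_3})}\bigr)^{\theta}$ shows it increases as $\theta$ decreases). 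So your bookkeeping does not recover the exponents claimed in (\ref{not}).

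The repair is precisely the paper's device: run the three-ball argument with outer radius $r_3/2$ and middle radius $(2r_2+r_3)/6$, i.e.\ prove the estimate for triples with $2r_2<r_3$, with elliptic conversions $r_1/2\to r_1$ at the inner ball and $r_3\to 2r_3$ at the outer ball, and then substitute $r_3\mapsto r_3/2$. This keeps $\alpha_3$ exact, since $\log\frac{r_3/2}{2(2r_2+r_3)/6}=\log\frac{3r_3}{2(2r_2+r_3)}$, shifts $\beta_3$ only by an absorbable constant, and needs $(2r_2+r_3)/6>r_2$, i.e.\ $4r_2<r_3$ --- which is exactly where the hypothesis and the particular forms of $\alpha_3,\beta_3$ come from. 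A secondary point: your closing remark attributes $n\ge 4m$ to the necessity of the bootstrap, but that is not where the hypothesis does its work (if $n<4m$ the $L^2$-to-$L^\infty$ bound is only easier). It is needed in the conversions just described: at the small radius the factor $(r_1-r_1/2)^{-4m}r_1^{\,n}=2^{4m}r_1^{\,n-4m}$ (and similarly the power of $r_3$) remains bounded as the radius goes to $0$ only because $n\ge 4m$; this is the analogue of the paper's step $r^{-2m}\|u\|_{L^2(\mathbb B_r)}\le r^{n/2-2m}\|u\|_{L^\infty(\mathbb B_r)}\le\|u\|_{L^\infty(\mathbb B_r)}$.
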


\begin{proof}
Thanks to Lemma \ref{afa} in the case of $p=2$, we can estimate the
solution in (\ref{nor}) by the following
$$\|u\|_{W^{2m,2}( \mathbb B_{\sigma
})}\leq
\frac{C}{(1-\sigma)^{2m}}(\|\overline{V}\|_{L^\infty}+1)\|u\|_{L^2(\mathbb
B)}.$$ By Sobolev imbedding inequality, if $n>4m$,
$$ \|u\|_{L^{\frac{2n}{n-4m}}( \mathbb B_{\sigma })} \leq
\frac{C(\sigma)}{(1-\sigma)^{2m}}(\|\overline{V}\|_{L^\infty}+1)\|u\|_{L^2(\mathbb
B)},$$ where $C(\sigma)$ depends on $\sigma$, $n$ and $m$. Applying
Lemma \ref{afa} again with $p=\frac{2n}{n-4m}$ and the latter
inequality, note that $p=\frac{2n}{n-4m}>2$,
\begin{eqnarray*}
\|u\|_{W^{2m,\frac{2n}{n-4m}}( \mathbb B_{\sigma^2 })}&\leq&
\frac{C(\sigma)}{(1-\sigma)^{2m}}(\|\overline{V}\|_{L^\infty}+1)\|u\|_{L^\frac{2n}{n-4m}(\mathbb B_\sigma)}\\
&\leq &
\frac{C(\sigma)}{(1-\sigma)^{4m}}(\|\overline{V}\|_{L^\infty}+1)^2\|u\|_{L^2(\mathbb
B)}.
\end{eqnarray*}
As we know $$\|u\|_{W^{2m, q}( \mathbb B)}\geq
C\|u\|_{L^\infty(\mathbb B)}, \quad \mbox{if}  \quad
q>\frac{n}{2m}.$$ Employing the above bootstrap argument finite
times, e.g. $k$ times, which depends only on $n$ and $m$ and using
the above Sobolev imbedding inequality, we get
$$\|u\|_{L^\infty( \mathbb B_{\sigma^{k}})}\leq
\frac{C(\sigma)}{(1-\sigma)^{2^km}}(\|\overline{V}\|_{L^\infty}+1)^{k}\|u\|_{L^2(\mathbb
B)}.$$ Let $\sigma^{k}=\frac{1}{2}$,
$$
\|u\|_{L^\infty( \mathbb B_{\frac{1}{2}})}\leq C
(\|\overline{V}\|_{L^\infty}+1)^{C}\|u\|_{L^2(\mathbb B)},$$ where
$C$ depends on only $n$ and $m$. If $n=4m$, we will have the similar
result by applying the bootstrap arguments twice. By a rescaling
argument, we have
$$
\|u\|_{L^\infty( \mathbb B_{\delta})}\leq C
(\|\overline{V}\|_{L^\infty}+1)^{C}\delta^{-\frac{n}{2}}\|u\|_{L^2(\mathbb
B_{2\delta})},$$ if $0<\delta<\frac{1}{2}.$ Furthermore, we get
$$\|u\|_{L^\infty(\mathbb B_{r})}\leq
Cv^{C}(\rho-r)^{-\frac{n}{2}}\|u\|_{L^2(\mathbb B_{\rho})}$$ for
$0<r<\rho<\frac{1}{2}$. Recall that
$v=(\|\overline{V}\|_{L^\infty}+1)\leq 2M$. Thus,
$$\|u\|_{L^\infty(\mathbb B_{r_2})}\leq
CM^{C}(r_3-2r_2)^{-\frac{n}{2}}\|u\|_{L^2(\mathbb
B_{\frac{r_2+r_3}{3}})}.$$ Based on Lemma \ref{hhh} and the latter
inequality, we deduce that
\begin{equation}
\|u\|_{L^\infty(\mathbb B_{r_2})}\leq C
\exp{(CM)}(r_3-2r_2)^{-\frac{n}{2}}(\frac{3r_3}{2(r_2+r_3)})^{CM}h^{\frac{\alpha'}{\alpha'+\beta'}}(\frac{r_1}{2})
h^{\frac{\beta'}{\alpha'+\beta'}}(r_3), \label{end}
\end{equation}
where $$ \alpha'=\log\frac{3r_3}{2(r_2+r_3)}  $$ and
$$\beta'=C\log\frac{2(r_2+r_3)}{3r_1}.$$
Taking Lemma \ref{afa} and (\ref{rrr}) into account with $p=2$, we
have
$$
h({\frac{r_1}{2}})\leq C
(\|\overline{V}\|_{L^\infty}+1)r_1^{-2m}\|u\|_{L^2(\mathbb
B_{r_1})}$$ and
$$
h({{r_3}})\leq
C(\|\overline{V}\|_{L^\infty}+1)r_3^{-2m}\|u\|_{L^2(\mathbb
B_{2r_3})}.$$ It is true that
$$r^{-2m}\|u\|_{L^2(\mathbb B_{r})}\leq
r^{n/2-2m}\|u\|_{L^\infty(\mathbb B_{r})}\leq
\|u\|_{L^\infty(\mathbb B_{r})}$$ if $0<r<1$ and $n\geq 4m$. From
(\ref{end}) and the last three inequalities, we obtain
$$\|u\|_{L^\infty(\mathbb B_{r_2})}\leq
C\exp{(CM)}(r_3-2r_2)^{-\frac{n}{2}}(\frac{3r_3}{2(r_2+r_3)})^{CM}\|u\|^{\frac{\alpha'}
{\alpha'+\beta'}}_{L^\infty(\mathbb
B_{r_1})}\|u\|^{\frac{\beta'}{\alpha'+\beta'}}_{L^\infty(\mathbb
B_{2r_{3}})}.$$ By a rescaling argument, we arrive at the conclusion
of the lemma.
\end{proof}

We begin to prove Theorem \ref{th2}. The idea is  similar to the
proof of Theorem \ref{th1}. We also use the propagation of smallness
argument.
\begin{proof} [Proof of Theorem  \ref{th2}] We choose a small $r$ such that
$$\sup_{\mathbb B_{\frac{r}{2}(0)}}|u|=\epsilon,$$ where
$\epsilon>0$. Since $\sup_{|x|\leq 1}|u(x)|\geq 1$, there should
exist some $\bar x\in \mathbb B_1$ such that $u(\bar
x)=\sup_{|x|\leq 1}|u(x)|\geq 1$. We select a sequence of balls with
radius $r$ centered at $x_0=0, \ x_1, \cdots, x_d$ so that
$x_{i+1}\in \mathbb B_{\frac{r}{2}}(x_i)$ and $\bar x\in \mathbb
B_{r}(x_d)$, where $d$ depends on the radius $r$ which is to be
fixed. Employing the $L^\infty$-version of  three-ball lemma (i.e.
Lemma \ref{tir2}) with $r_1=\frac{r}{2}$, $r_2=r$, and $r_3=6r$ and
the boundedness assumption of $u$, we get
$$ \|u\|_{L^\infty{(\mathbb B_r)}}\leq C_1r^{-\frac{n}{2}}\epsilon^{\theta}\exp{(CM)} $$
where $1<\theta=\frac{\log 9/8}{\log 9/8+C\log 16/3}<1$ and $C_1$
depends on the $L^\infty$-norm of $u$, $n$ and $m$.

Iterating the above argument with $L^\infty$-version of three-ball
lemma for ball centered at $x_i$ and using the fact that
$\|u\|_{L^\infty(\mathbb B_\frac{r}{2}(x_{i+1}))}\leq
\|u\|_{L^\infty(\mathbb B_r(x_{i}))}$, we have
$$\|u\|_{L^\infty(\mathbb B_r(x_{i}))}\leq C_i
\epsilon^{D_i}r^{-\frac{F_in}{2}}\exp{(E_iM)}$$ for $i=0, 1, \cdots,
d$, where $C_i$ is a constant depending on $d$ and $L^\infty$-norm
of $u$, and $D_i$, $E_i$, $F_i$ are constants depending on $d$. By
the fact that $ u(\bar x)\geq 1$ and $\bar x \in \mathbb
B_{r}(x_d)$, we obtain
$$\sup_{\mathbb B_{\frac{r}{2}(0)}}|u|=\epsilon\geq
K_1\exp{(-K_2M)}r^{\frac{K_3n}{2}},$$ where $K_1$ is a constant
depending on $d$ and $L^\infty$-norm of $u$, and $K_2$, $K_3$ are
constants depending on $d$.

Applying Lemma \ref{tir2} again centered at origin with
$r_2=\frac{r}{2}>r_1$ and $r_3=3r$, where $r_1$ is sufficiently
small, we have
$$ K_1\exp{(-K_2M)}r^{\frac{K_3n}{2}}\leq C\exp{(CM)}r^{-\frac{n}{2}}
\|u\|_{L^\infty(\mathbb
B_{r_1})}^{\frac{\alpha_3}{\alpha_3+\beta_3}}
C_0^{\frac{\beta_3}{\alpha_3+\beta_3}}.
$$
Recall that $C_0$ is the $L^\infty$-norm for $u$ in $ \mathbb
B_{10}$. Then
\begin{equation}
K_4^{1+q}\exp{(-(1+q)K_5M)}r^{K_6(1+q)}\leq \|u\|_{L^\infty(\mathbb
B_{r_1})}, \label{fina3}
\end{equation}
 where $K_4$ depends on $d$ and $L^\infty$-norm of $u$, and $K_5$, $K_6$ depend on $d$,
$$ q=\frac{\beta_3}{\alpha_3}=\frac{C\log\frac{4r}{3r_1}}{\log\frac{9}{8}}= -K_7+C\log\frac{1}{r_1},$$
with constant $K_7>0$ depending on $r$. At this moment we fix the
small value of $r$. For instance, let $r=\frac{1}{100}$. Then the
value of $d$ is determined too. The inequality (\ref{fina3}) implies
that
$$\|u\|_{L^\infty(\mathbb B_{r_1})}\geq K_8r_1^{K_9M}, $$
where the constants $K_8, K_9$ depend on the dimension $n$, $m$, and
$C_0$. The proof of Theorem \ref{th2} is arrived.
\end{proof}

Thanks to Theorem \ref{th2}, we are able to prove the following
corollary for higher order elliptic equations in (\ref{bkh}), which
characterizes the asymptotic behavior of $u$ at infinity.
\begin{proof}[Proof of Corollary  \ref{cor2}]
We adapt the proof in \cite{K}. Since $u$ is continuous, we can find
$|x_0|=R$ so that $M(R)=\sup_{B_1(x_0)}|u(x)|$. Let
$$u_R(x)=u(R(x+\frac{x_0}{R})) \ \ \mbox{and}
\ \ \overline{V}_R=\overline{V}(R(x+\frac{x_0}{R})).$$ Then
$$(-\lap)^m u_R=R^{2m}\overline{V}_Ru_R,$$
with $\|u_R\|_{L^\infty}\leq C_0$ and
$\|R^{2m}\overline{V}_R\|_{L^\infty}\leq R^{2m}$. So $M=R^{2m}$ in
the notation of Theorem \ref{th2}. If $\bar x_0=\frac{-x_0}{R}$,
then $|\bar x_0|=1$ and $u_R(\bar x_0)=u(0)=1$. Hence $\|
u_R\|_{L^\infty (B_1)}\geq 1$. Note that $\sup_{\mathbb
B_1(x_0)}|u(x)|=\sup_{\mathbb B_{r_0}}|u_R(y)|$, where
$r_0=\frac{1}{R}$. The conclusion in Theorem \ref{th2} leads to
\begin{equation}
\begin{array}{lll}
M(R)=\sup_{\mathbb B_{r_0}}|u_R(y)|&\geq& C r_0^{CM} \nonumber\medskip\\
&=&C(\frac{1}{R})^{CR^{2m}} \nonumber \medskip\\
&=&C\exp(-CR^{2m}\log R),
\end{array}
\end{equation}
where $C$ depends on $n$, $m$ and $C_0$. Thus, the corollary
follows.
\end{proof}

\section{Strong unique continuation}
 In the rest of the paper, we will show the strong unique
continuation result for higher order elliptic equations by the
monotonicity of frequency function. This variant of frequency
function is also powerful in obtaining unique continuation results.
For strong unique continuation results of semilinear equations and
system of equations using frequency function, we refer to \cite{GL},
\cite{GL1} and \cite{AM} for the Lam\'{e} system of elasticity. Let
$u$ be the solution in (\ref{fff2}). Since we do not need to control
the vanishing order of solutions, we assume $\alpha=0$ for $H(r)$,
i.e.
$$ H(r)=\sum^m_{i=1}\int_{\mathbb B_r}u_i^2\,dx.$$
 We can check
that
\begin{equation}
H'(r)=\frac{n}{r}H(r)+\frac{1}{r}I(r), \label{comew}
\end{equation}
where
$$ I(r)=2\sum^m_{i=1}\int (x\cdot\nabla
u_i)u_i \,dx.$$ We consider the following frequency function
\begin{equation}
N(r)=\frac{I(r)}{H(r)}.
\end{equation}
Similar arguments as the proof of Lemma \ref{mono2} lead to
following monotonicity.
\begin{lemma}
There exists a constant $C$ depending only on $n, m$ such that
$$\exp{(Cr)}(N(r)+(\|\overline{V}\|_{L^\infty_{loc}}+1)^2) $$
is nondecreasing function of $r\in (0, 1)$. \label{mono3}
\end{lemma}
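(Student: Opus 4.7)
The plan is to mimic the derivation of Lemma \ref{mono2} line by line, specializing to $\alpha=0$, and to point out which terms drop out as a result. Even though $H(r)=\sum_i\int_{\mathbb B_r} u_i^2\,dx$ is unweighted here, I would first re-express $I(r)=2\sum_i\int(x\cdot\nabla u_i)u_i\,dx$ by the same divergence-theorem maneuver as in (\ref{lasth}) against the weight $(r^2-|x|^2)$, which still vanishes on $\partial\mathbb B_r$. This gives
\[
I(r)=\sum_{i=1}^{m}\int\bigl(|\nabla u_i|^2+u_i\Delta u_i\bigr)(r^2-|x|^2)\,dx,
\]
and then substituting (\ref{syst}) turns $I(r)$ into the analogue of (\ref{sui}) with $\alpha=0$.

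Next, I would differentiate this expression in $r$. Using the same decomposition $2r(r^2-|x|^2)^0=\tfrac{2}{r}(r^2-|x|^2)+\tfrac{2}{r}|x|^2$ that was used to pass from the raw derivative to the form in the proof of Lemma \ref{mono2}, followed by the integration by parts in the $j$-th derivative on the mixed term $\sum_{j,l}\int x_l\partial_{jl}u_i\,\partial_j u_i\,(r^2-|x|^2)\,dx$, I get the Rellich-style identity
\[
I'(r)=\frac{n}{r}I(r)+\frac{4}{r}\sum_i\int(\nabla u_i\cdot x)^2\,dx+\text{(cross terms)},
\]
where the cross terms are products $u_iu_{i+1}$ and $\overline V u_1 u_m$ paired against either $(r^2-|x|^2)$ or $(r^2-|x|^2)|x|^2$. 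Applying Hölder/Cauchy–Schwarz as in (\ref{equ1})--(\ref{shi}), with $v=\|\overline V\|_{L^\infty_{loc}}+1$, each of these is absorbed into a multiple of $I(r)$ or of $v^2H(r)$. Because $\alpha=0$, the terms carrying the factor $\alpha v$ that appeared in Lemma \ref{mono2} simply vanish, leaving
\[
I'(r)\geq\frac{n}{r}I(r)-CI(r)-Cv^2H(r)+\frac{4}{r}\sum_i\int(\nabla u_i\cdot x)^2\,dx.
\]

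Finally, I would form $N'(r)=(I'(r)H(r)-H'(r)I(r))/H^2(r)$ and use (\ref{comew}) to cancel the $\tfrac{n}{r}I(r)H(r)$ terms. What remains is the sign-bracket
\[
\frac{4}{r}\sum_i\int(\nabla u_i\cdot x)^2\,dx\cdot H(r)-\frac{1}{r}I^2(r),
\]
which is nonnegative by Cauchy–Schwarz applied to $\tfrac{1}{2}I(r)=\sum_i\int(x\cdot\nabla u_i)u_i\,dx$. This yields $N'(r)+CN(r)+Cv^2\geq 0$, equivalent to
\[
\frac{d}{dr}\Bigl[\exp(Cr)\bigl(N(r)+v^2\bigr)\Bigr]\geq 0,
\]
which is the stated monotonicity.

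The main obstacle, such as it is, is purely bookkeeping: verifying that every boundary contribution in the chain of integrations by parts vanishes (which is exactly why the auxiliary weight $(r^2-|x|^2)$ must be inserted before differentiating), and checking that with $\alpha=0$ every cross term produced by the system (\ref{syst}) really does reduce to a constant times $I(r)$ or a constant times $v^2H(r)$, with no residual $\alpha v$-type term. Once that is confirmed, the Cauchy–Schwarz cancellation at the end works exactly as in Lemma \ref{mono2}, and the constant $C$ depends only on $n$ and $m$.
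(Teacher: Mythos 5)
Your proposal is correct and matches the paper's intended route: the paper's ``proof'' of Lemma~\ref{mono3} is literally the sentence ``Similar arguments as the proof of Lemma~\ref{mono2} lead to following monotonicity,'' and you are simply carrying that out with $\alpha=0$, re-expressing the unweighted $I(r)$ via the vanishing weight $(r^2-|x|^2)$ to keep boundary terms from appearing under differentiation, and then running the Rellich identity, Cauchy--Schwarz absorption of cross terms into $CI(r)+Cv^2H(r)$ (noting that since $v\ge 1$ the leftover order-$v$ pieces are absorbed into $v^2$), and the final Cauchy--Schwarz sign bracket exactly as in Lemma~\ref{mono2}.
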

Based on the monotonicity property in above lemma, we are able to
show the proof of Theorem  \ref{th3}.

\begin{proof}[Proof of Theorem  \ref{th3}]
 By the equality
(\ref{comew}), we get
\begin{equation}(\log\frac{H(r)}{r^n})'=\frac{\overline{N}(r)\exp{(-Cr)}-v^2}{r},
\label{ajm}
\end{equation}
where
$$
\overline{N}(r)=\exp{(Cr)}(N(r)+v^2).$$ Recall that
$v=\|\overline{V}\|_{L^\infty_{loc}}+1$. Integrating from $R$ to
$4R$ for the equality (\ref{ajm}) yields that
$$\log{4^{-n}\frac{H(4R)}{H(R)}}\leq C\overline{N}(1)\log 4,$$
where $R$ is chosen to be small and $C$ depends on $n$ and $m$.
Taking exponential of both sides,
$$H(4R)\leq \exp{(C(\overline{N}(1)+1))}H(R),$$ that is,
\begin{equation}
\sum^{m}_{i=1}\int_{\mathbb B_{4R}} u_i^2\,dx\leq C
\sum^{m}_{i=1}\int_{\mathbb B_{R}}u_i^2\,dx, \label{fff}
\end{equation}
where $C$ depends on $\overline{N}(1) $. From the decomposition in
(\ref{syst}) and scaling arguments in (\ref{rrr}), we have
$$ (\sum^m_{i=1}\int_{\mathbb B_R}u_i^2\,dx)^{\frac{1}{2}}\leq CR^{-2m}
(\|\overline{V}\|_{L^\infty_{loc}}+1)\|u\|_{L^2(\mathbb B_{2R})},$$
 Therefore, with the aid of
(\ref{fff}),
$$\int_{\mathbb B_{4R}}u^2\,dx\leq \sum^m_{i=1}\int_{\mathbb
B_{4R}}u_i^2\,dx\leq C\sum^m_{i=1}\int_{\mathbb B_{R}}u_i^2\,dx\leq
C R^{-4m}\int_{\mathbb B_{2R}}u^2\,dx. $$ Thus, we get a doubling
type estimate
\begin{equation}
\int_{\mathbb B_{2R}}|u|^2\,dx \leq C R^{-4m}\int_{\mathbb
B_{R}}|u|^2\,dx, \label{dou}
\end{equation}
where $C$ depends on $\overline{N}(1)$,
$\|\overline{V}\|_{L^\infty_{loc}}$, $n$ and $m$. Now we fix $R$ and
prove that $u(x)\equiv 0$ on $\mathbb B_R$ from (\ref{dou}). The
argument is standard. See e.g. \cite{GL} on page 256-257.
\begin{equation}
\int_{\mathbb B_{R}}u^2\,dx\leq (C R^{-4m})^k\int_{\mathbb
B_{2^{-k}R}}u^2\,dx=(C R^{-4m})^k|\mathbb
B_{2^{-k}R}|^\beta\frac{1}{|\mathbb B_{2^{-k}R}|^\beta}\int_{\mathbb
B_{2^{-k}R}}u^2\,dx,\label{over}
\end{equation}
where the constant $\beta$ to be fixed. We choose $\beta$ such that
$CR^{-4m}2^{-n\beta}=1$. It yields that
\begin{equation}
\int_{\mathbb B_{R}}u^2\,dx\leq C R^{n\beta}\frac{1}{|\mathbb
B_{2^{-k}R}|^\beta}\int_{\mathbb B_{2^{-k}R}}u^2\,dx\to 0 \quad
\mbox{as}\ k\to \infty
\end{equation}
because of (\ref{vani}). Then $u\equiv 0$ in $\mathbb B_R$. Since we
can choose $\mathbb B_R$ arbitrarily in $\Omega$, the proof of
Theorem \ref{th3} follows.

\end{proof}

\section{Acknowledgement}

The author is  indebted to Professor C.D. Sogge for pointing out the
problem of nodal sets which leads to this work. The author is very
grateful to him for helpful discussions and encouragement. The
author also would like to thank Professor C.E. Kenig for very useful
comments on the earlier version of the manuscript.


\begin{thebibliography}{CL}

\bibitem[A]{A} F. Almgren, JR., Dirichlet's problem for multiple
valued function and the regularity of mass minimizing integral
currents, Minimal submanifolds and geodesics (Proc. Japan-United
States Sem., Tokyo, 1977), North-Holland, Amsterdam-New York, 1979,
1-6.

\bibitem[AM]{AM} G. Alessandrini and A. Morassi,  Strong unique
continuation for the Lam\'{e} system of elasticity, Comm. Partial
Differential Equations 26(2001), no. 9-10, 1787-1810.

\bibitem[Ar]{Ar} N. Aronszajn, A unique continuation theorem for
solutions of elliptic parital differential equations or inequalities
of second order, J. Math. Pures Appl. 36(1957), 235-249.

\bibitem[B]{B}L. Bakri, Quantitative uniqueness for Schr\"{o}dinger operator,
Indiana Univ. Math. J. 61(2012), no. 4, 1565-1580.

\bibitem[BK]{BK} J. Bourgain and C. Kenig, On localization in the
Anderson-Bernoulli model in higher dimensions, Invent. Math.
161(2005), 389-426.

\bibitem[CG]{CG} F. Colombini and C. Grammatico,  Some remarks on strong unique
continuation for the Laplace operator and its powers,
 Comm. Partial Differential Equations 24(1999), no. 5-6, 1079-1094.

\bibitem[DF]{DF}H, Donnelly and C, Fefferman, Nodal sets of eigenfunctions
on Riemannian manifolds, Invent. Math. 93(1988), 161-183.

\bibitem[DF1]{DF1} H. Donnelly and C. Fefferman, Nodal sets of eigenfunctions:
Riemannian manifolds with boundary, in: Analysis, Et Cetera,
Academic Press, Boston, MA, 1990, 251-262.

\bibitem[GL]{GL} N. Garofalo and F.-H. Lin, Monotonicity properties of
variational integrals, $A_p$ weights and unique continuation,
Indiana Univ. Math. J. 35(1986), 245-268.

\bibitem[GL1]{GL1} N. Garofalo and F.-H. Lin, Unique continuation for elliptic
operators: a geometric-variational approach, Comm. Pure Appl. Math.
40(1987), no. 3, 347-366.

\bibitem[H]{H} L. H\"{o}rmander, Uniqueness theorems for second
order elliptic differential equations, Comm. P.D.E., 8(1983), 21-64.

\bibitem[HL]{HL} Q. Han and F.-H. Lin, Nodal sets of solutions of Elliptic
Differential Equations, book in preparation (online at
http://www.nd.edu/qhan/nodal.pdf).

\bibitem[JK]{JK} D. Jerison and C. E.  Kenig, Unique continuation and absence
of positive eigenvalues for Schr\"{o}dinger operators.  With an
appendix by E. M. Stein. Ann. of Math. 121(1985), no. 3. 463-494.

\bibitem[K]{K} C. E. Kenig, Some recent applications of unique
continuation, In: Recent Developments in Nonlinear Partial
Differential Equations, Contemporary Mathematics, Vol. 439(2007)
Providence, RI: AMS, 25-56.

\bibitem[KT]{KT} H. Koch and D. Tataru, Carleman estimates and
unique continuation for second-order elliptic equations with
nonsmooth coefficients, Comm. Pure Appl. Math. 54(2001), 339-360.

\bibitem[KRS]{KRS}C. E. Kenig, A, Ruiz and C. D. Sogge, Uniform Sobolev inequalities
and unique continuation for second order constant coefficient
differential operators, Duke Math. J. 55(1987), no. 2, 329-347.

\bibitem[Ku0]{Ku0}I. Kukavica, Level sets for the stationary solutions of the
Ginzburg-Landau equation, Calc. Var. Partial Differential Equations
5(1997), no. 6, 511-521.

\bibitem[Ku]{Ku} I. Kukavica, Quantitative uniqueness for second-order elliptic
operators, Duke Math. J. 91(1998), 225-240.

\bibitem[Ku1]{Ku1}I. Kukavica,Quantitative, uniqueness, and vortex degree
estimates for solutions of the Ginzburg-Landau equation,
 Electron. J. Differential Equations (61)2000, 1-15.

\bibitem[JL]{JL}D. Jerison and G. Lebeau,  Nodal sets of sums of
eigenfunctions. Harmonic analysis and partial differential equations
(Chicago, IL, 1996), Chicago Lectures in Math., Univ. Chicago Press,
Chicago, IL, 1999,  223-239.

\bibitem[Lin]{Lin}F.-H. Lin, Nodal sets of solutions of elliptic
equations of elliptic and parabolic equations, Comm. Pure Appl Math.
44(1991), 287-308.

\bibitem[LSW]{LSW}C.L. Lin, N. Sei and J.-N. Wang, Quantitative
uniqueness for the power of the Laplacian with singular
coefficients, Ann. Sc. Norm. Super. Pisa Cl. Sci. 10(2011), no. 3,
513-529.

\bibitem[LWZ]{LWZ}G. Lu, P. Wang and J. Zhu, Liouville-type theorems
and decay estimates for solutions to higher order elliptic
equations, Ann. Inst. H. Poincar\'{e} Anal. Non Lin\'{e}aire
29(2012), no. 5, 653-665.

\bibitem[M]{M} V.Z. Meshkov, on the possible rate of decay at
infinity of solutions of second order partial differential
equations, Math. USSR Sbornik 72(1992), 343-360.

\bibitem[S]{S} C.D. Sogge, Strong uniqueness theorems for second order elliptic
differential equations, Amer. J. Math. 112(1990) 943-984.

\bibitem[W]{W} T. Wolff, Recent work on sharp estimates in second order
elliptic unique continuation problems, Fourier analysis and partial
differential equations (Miraflores de la Sierra, 1992), 99-128,
Stud. Adv. Math., CRC. Boca Raton, FL, 1995.

\bibitem[Z]{Z} S. Zelditch, Local and global analysis of eigenfunctions on Riemannian
manifolds, in: Handbook of Geometric Analysis, in: Adv. Lect. Math.
(ALM), vol. 7(1), Int. Press, Somerville, MA, 2008, 545-658.




\end{thebibliography}
\end{document}